\documentclass[sigconf]{acmart}

\usepackage{hyperref}
\usepackage{macros}
\usepackage{amscd}
\usepackage{xcolor}
\usepackage{microtype}
\usepackage{balance}
\usepackage{tabularx}

\definecolor{green}{rgb}{.08,.69,.10}
\definecolor{blue}{rgb}{.01,.26,.87}
\definecolor{red}{rgb}{.89,0,0}
\hypersetup{colorlinks,linkcolor={red!70!black},
  citecolor={green!70!black},urlcolor={blue!70!black}}

\newtheorem{theorem}{Theorem}
\newtheorem{proposition}[theorem]{Proposition}
\newtheorem{lemma}[theorem]{Lemma}
\newtheorem{corollary}[theorem]{Corollary}
\newtheorem{definition}[theorem]{Definition}

\settopmatter{printacmref=true}
\fancyhead{}
\def\BibTeX{{\rm B\kern-.05em{\sc i\kern-.025em b}\kern-.08emT\kern-.1667em\lower.7ex\hbox{E}\kern-.125emX}}

\copyrightyear{2019}
\acmYear{2019}
\setcopyright{acmlicensed}
\acmConference[ISSAC '19]{International Symposium on Symbolic and Algebraic Computation}{July 15--18, 2019}{Beijing, China}
\acmBooktitle{International Symposium on Symbolic and Algebraic Computation (ISSAC '19), July 15--18, 2019, Beijing, China}
\acmPrice{15.00}
\acmDOI{10.1145/3326229.3326251}
\acmISBN{978-1-4503-6084-5/19/07}

\begin{document}

\title{Standard Lattices of Compatibly Embedded Finite Fields}

\author{Luca De Feo}
\email{luca.de-feo@uvsq.fr}
\orcid{0000-0002-9321-0773}
\affiliation{%
  \institution{Universit\'e Paris Saclay -- UVSQ, LMV}
}
\author{Hugues Randriam}
\email{randriam@enst.fr}
\affiliation{%
  \institution{LTCI, T\'el\'ecom ParisTech}
}
\author{\'Edouard Rousseau}
\email{erousseau@enst.fr}
\affiliation{
  \institution{LTCI, T\'el\'ecom ParisTech}
  \institution{Universit\'e Paris Saclay -- UVSQ, LMV}
}

\begin{abstract}
  Lattices of compatibly embedded finite fields are useful in computer
  algebra systems for managing many extensions of a finite field
  $\F_p$ at once. %
  They can also be used to represent the algebraic closure $\bar\F_p$,
  and to represent all finite fields in a standard manner.

  The most well known constructions are Conway polynomials, and the
  Bosma--Cannon--Steel framework used in Magma. %
  In this work, leveraging the theory of the Lenstra-Allom\-bert
  isomorphism algorithm, we generalize both at the same time. %
  
  Compared to Conway polynomials, our construction defines a much
  larger set of field extensions from a small pre-computed table;
  however it is provably as inefficient as Conway polynomials if one
  wants to represent \emph{all} field extensions, and thus yields no
  asymptotic improvement for representing $\bar\F_p$.

  Compared to Bosma--Cannon--Steel lattices, it is considerably more
  efficient both in computation time and storage: all algorithms have
  at worst quadratic complexity, and storage is linear in the number
  of represented field extensions and their degrees.

  Our implementation written in C/Flint/Julia/Nemo shows that our
  construction in indeed practical.
\end{abstract}

\begin{CCSXML}
<ccs2012>
<concept>
<concept_id>10002950.10003705</concept_id>
<concept_desc>Mathematics of computing~Mathematical software</concept_desc>
<concept_significance>300</concept_significance>
</concept>
<concept>
<concept_id>10010147.10010148.10010149.10010150</concept_id>
<concept_desc>Computing methodologies~Algebraic algorithms</concept_desc>
<concept_significance>500</concept_significance>
</concept>
</ccs2012>
\end{CCSXML}

\ccsdesc[300]{Mathematics of computing~Mathematical software}
\ccsdesc[500]{Computing methodologies~Algebraic algorithms}

\keywords{Finite fields; field extensions; Conway polynomials.}


\maketitle

\section{Introduction}
\label{sec:introduction}

Computer algebra systems (CAS) are often faced with the problem of
constructing several extensions of a finite field $\F_p$ in a
\emph{compatible} way, i.e., such that the (subfield) inclusion
lattice of the given extensions can be computed and evaluated
efficiently.

Concretely, what is sought is a data structure $\Lambda$ to represent
arbitrary collections of extensions of $\F_{p}$, in such a way that
elements of $\F_{p^m}$ are represented in optimal space (i.e., $O(m)$
coefficients), and that arithmetic operations are performed
efficiently (i.e., $O\left(\lcm(\ell,m)^d\right)$ arithmetic
operations to combine an element of $\F_{p^\ell}$ and an element of
$\F_{p^m}$, where $d\le 3$ and, possibly, $d=1+\varepsilon$). %
To this end, it is useful to set several sub-goals:

\begin{description}
\item[\emph{Effective embeddings:}] For any pair of extensions
  $k\subset K$ in $\Lambda$, there exists an efficiently computable
  embedding $\phi:k\to K$, and algorithms to evaluate $\phi$ on $k$,
  and the section $\phi^{-1}$ on $K$.
\item[\emph{Compatibility:}] The embeddings are \emph{compatible},
  i.e., for any triple $k\subset K\subset L$ in $\Lambda$, and
  embeddings $\phi:k\to K$, $\psi:K\to L$, $\chi:k\to L$, one has
  $\chi=\psi\circ\phi$.
\item[\emph{Incrementality:}] The data associated with an extension
  (e.g., its irreducible polynomial, change-of-basis matrices, \dots)
  must be computable efficiently and \emph{incrementally}, i.e.,
  adding a new field extension to $\Lambda$ does not require
  recomputing data for all extensions already in $\Lambda$. %
\item[\emph{Uniqueness:}] Any extension of $\F_{p}$ is determined by an
  irreducible polynomial whose definition only depends on the
  characteristic $p$ and the degree of the extension. %
\item[\emph{Generality:}] Extensions of $\F_{p}$ can be represented by
  arbitrary irreducible polynomials.
\end{description}

Some goals, such as incrementality, uniqueness and generality are
optional, and it is obvious that uniqueness and generality are even in
conflict with each other. %
An incremental data structure can be used to effectively represent an
algebraic closure $\bar\F_{p}$, with new finite extensions built on the
fly as they are needed. %
Uniqueness is useful for defining field elements in a standard way,
portable between different CAS, while generality is useful in a
context where the user is left with the freedom of choosing the
defining polynomials. %
Note that any solution can be made unique by replacing all random
choices with pseudo-random ones, however one is usually interested in
unique solutions that have a simple mathematical description. %
Also, any solution can be made general by means of an isomorphism
algorithm~\cite{LenstraJr91,Allombert02,brieulle2018computing,narayanan2016fast}. %
Other optional goals, such as computing normal bases or evaluating
Frobenius morphisms, may be added to the list, however they are out of
the scope of this work.

\paragraph{Previous work}
The first and most well known solution is the family of Conway
polynomials~\cite{Nickel1988,heath+loehr99}, first adopted in
GAP~\cite{GAP4}, and then also by Magma~\cite{MAGMA} and
Sage~\cite{Sage}. %
Conway polynomials yield uniqueness, however computing them requires
exponential time using the best known algorithm, thus incrementality is only available at a
prohibitive cost; for this reason, they are usually pre-computed and
tabulated up to some bound.

Lenstra~\cite{LenstraJr91} was the first to show the existence of a (incremental, general) data
structure computable in deterministic polynomial time. He proved that, besides the problem of
finding irreducible polynomials, any other question is amenable to
linear algebra. %
Subsequent work of Lenstra and de
Smit~\cite{lenstra+desmit08-stdmodels} tackled the uniqueness problem,
albeit only from a theoretical point of view. %

In practice, randomized algorithms are good enough for a CAS, then
polynomial factorization and basic linear algebra provide an easy
(incremental, general) solution, that was first analyzed by Bosma,
Cannon and Steel~\cite{bosma+cannon+steel97}, and is currently used in
Magma. %

All solutions presented so far have superquadratic complexity, i.e.,
$d>2$. %
Recent work on embedding algorithms~\cite{DoSc12,DeDoSc13,DeDoSc2014}
yields subquadratic (more precisely, $d\le 1.5$) solutions for
specially constructed (non-unique, non-general) families of
irreducible polynomials, and even quasi-optimal ones (i.e.,
$d=1+\varepsilon$) if a quasi-linear modular composition algorithm is
available. %
However these constructions involve counting points of random elliptic
curves over finite fields, and have thus a rather high polynomial
dependency in $\log p$; for this reason, they are usually considered
practical only for relatively small characteristic.

\paragraph{Our contribution}
In this work we present an incremental, general and/or unique solution
for lattices of compatibly embedded finite fields, where all
embeddings can be computed and evaluated in quasi-quadratic time. %
Our starting point is Allombert's~\cite{Allombert02} and
subsequent~\cite{brieulle2018computing} improvements to Lenstra's
isomorphism algorithm~\cite{LenstraJr91}. %
Plugging them in the Bosma--Cannon--Steel framework immediately produces
an incremental general solution with quasi-quadratic complexity;
however we go much further. %
Indeed, we show that the compatibility requirement can be taken a step
further by constructing a lattice of $\F_{p}$-algebras with a
distinguished element, which is a byproduct of the Lenstra-Allombert
algorithm.

The advantages of our construction over a naive combination of the
Lenstra-Allombert algorithm and the Bosma--Cannon--Steel framework are
multiple. %
Storage drops from quadratic to linear in the number of extensions
stored in $\Lambda$ and in their degrees, and the cost of adding a new
extension to $\Lambda$ drops similarly. %

Our $\F_{p}$-algebras are constructed by tensoring an arbitrary
lattice of extensions of $\F_{p}$ with what we call a \emph{cyclotomic
  lattice} (see next section). %
In this work we mostly abstract away from the concrete instantiation
of the cyclotomic lattice, only fixing a choice in
Section~\ref{sec:implementation}, where we use Conway polynomials to
analyze the complexity and implement our algorithms. %
This choice allows us to uniquely represent finite fields of degree
exponentially larger than with Conway polynomials alone; however it
also has the serious drawback of being generically as hard to compute
as Conway polynomials, and thus relatively unpractical. %
We leave the exploration of other, more practical, instantiations of
cyclotomic lattices for future work.

\paragraph{Organization}
The presentation is structured as follows. %
In Section~\ref{sec:conway} we review some basic algorithms and facts
on roots of unity and Conway polynomials. %
In Section~\ref{sec:lenstra} we review the Lenstra-Allombert algorithm
and we define and study \emph{Kummer algebras}, the main ingredient to
our construction. %
In Section~\ref{sec:compatibleH90} we introduce a notion of
compatibility for solutions of Hilbert~90 in Kummer algebras, that
provides standard defining polynomials for finite fields.  Then in
Section~\ref{sec:construction} we again use these compatible solutions
to construct standard compatible embeddings between finite fields,
from which a lattice can be incrementally constructed.  Finally, in
Section~\ref{sec:implementation} we give the complexities of our
algorithms, and present our implementation.

\section{Preliminaries}
\label{sec:conway}

\paragraph{Fundamental algorithms and complexities}
Throughout this paper we let $\F_p$ be a finite field. For simplicity,
we shall assume that $p$ is prime, which is arguably the most useful
case, however our results could easily be extend to non-prime
fields. We measure time complexities as a number of arithmetic
operations $(+,\times,/)$ over $\F_p$, and storage as a number of
elements of $\F_p$. %
We let $\M(m)$ denote the number of operations required to multiply
two polynomials with coefficients in $\F_p$ of degree at most $m$, and
adopt the usual super-linearity assumptions on the function $\M$
(see~\cite[Ch.~8.3]{vzGG}).

Any finite extension $\F_{p^m}$ can be represented as the quotient of
$\F_p[X]$ by an irreducible polynomial of degree $m$. %
The algorithms we present in the next sections need not assume any
particular representation for finite fields, however when analyzing
their complexities we will assume this representation. %
Then, multiplications in $\F_{p^m}$ can be carried out using
$O(\M(m))$ operations, and inversions using $O(\M(m)\log(m))$. %
In this work we will also need to perform computations in algebras
$\F_{p^m}\otimes\F_{p^n}$: representing them as quotients of a
bivariate polynomial ring, we can multiply elements using $O(\M(mn))$
operations.

We will extensively use a few standard routines, that we recall
briefly. %
Brent and Kung's algorithm~\cite{brent+kung} computes the
\textbf{modular composition} $f(g)\bmod h$ of three polynomials
$f,g,h\in\F_p[X]$ of degree at most $m$ using
$O\bigl(m^{(\omega+1)/2)}\bigr)$ operations, where $\omega$ is the
\emph{exponent of linear algebra} over $\F_p$. %
The Kedlaya--Umans algorithm~\cite{KeUm11} solves the same problem,
and has better complexity in the binary RAM model, however it is
widely considered impractical, we shall thus not consider it in our
complexity estimates.

By applying \emph{transposition
  techniques}~\cite{burgisser+clausen-shokrollahi,bostan+lecerf+schost:tellegen}
to Brent and Kung's algorithm, Shoup~\cite{shoup94,shoup99} derived an
algorithm to compute \textbf{minimal polynomials} of arbitrary
elements of $\F_{p^m}$, having the same complexity
$O\bigl(m^{(\omega+1)/2)}\bigr)$. %
Kedlaya and Umans' improvements also apply to Shoup's minimal
polynomial algorithm, with the same practical limitations.

Shoup's techniques can also be applied to \textbf{evaluate embeddings
  of finite fields}. %
Let $\F_{p^\ell},\F_{p^m}$ be a pair of finite fields related by an
embedding $\phi:\F_{p^\ell}\hookrightarrow\F_{p^m}$, and let a
generator $\alpha_\ell$ of $\F_{p^\ell}$ and its image
$\phi(\alpha_\ell)$ in $\F_{p^m}$ be given. %
Given these data, for any element $x\in\F_{p^\ell}$ it is possible to
compute its image $\phi(x)$ using $O\bigl(m^{(\omega+1)/2)}\bigr)$
operations; similarly, given an element $y=\phi(x)$ in $\F_{p^m}$, it
is possible to recover $x$ in the same asymptotic number of
operations. %
The relevant algorithms are summarized in~\cite[Sec.~6]{ffisom-long};
note that, for specially constructed generators $\alpha_\ell$, more
efficient algorithms may
exist~\cite{df+schost12,DoSc12,DeDoSc13,DeDoSc2014}.

The present work focuses on algorithms to \textbf{compute embeddings
  of finite fields}, i.e., algorithms that, given finite fields
$\F_{p^\ell}$ and $\F_{p^m}$ with $\ell\,|\,m$, find an embedding
$\phi:\F_{p^\ell}\hookrightarrow\F_{p^m}$, a generator $\alpha_\ell$
of $\F_{p^\ell}$, and its image $\phi(\alpha_\ell)$. %
An extensive review of known algorithms is given
in~\cite{brieulle2018computing}; here we shall only be interested in
the Lenstra--Allombert isomorphism
algorithm~\cite{LenstraJr91,Allombert02}, and its adaptation to
compatible lattices of finite fields.

\paragraph{Conway polynomials and cyclotomic lattices}
The algorithms of the next sections will be dependent on the
availability of a \emph{cyclotomic lattice}. %
By this we mean, formally, a collection
\[ \system=\{(K_\ell,\zeta_\ell)\}_{\ell\in I} \]
over some support set $I\subset\N\setminus p\N$.
Where $K_\ell$ is an explicitly represented finite extension of $\F_p$,
and $\zeta_\ell\in K_\ell$ a generating element that is also a primitive $\ell$-th root
of unity,
so \[ K_\ell=\F_p(\zeta_\ell),\quad(\zeta_\ell)^\ell=1, \]
together with
explicit embeddings
\[
\begin{array}{cccc}
  \embedcyc{\ell}{m}: & K_\ell & \hookrightarrow & K_m \\
  & \zeta_\ell & \mapsto & (\zeta_{m})^{\frac{m}{\ell}}
\end{array}
\]
whenever $l\,|\,m$.

If $I$ is a finite set of indices, there is an easy randomized
algorithm to construct a cyclotomic lattice: compute
$n=\lcm_{\ell\in I}(\ell)$, construct the smallest field $\F_{p^a}$ such
that $n$ divides $p^a-1$, take a random $x^{(p^a-1)/n}\in\F_{p^a}$ and
test that it has multiplicative order $n$;
then all roots $\zeta_\ell$ in the
lattice are constructed as powers of this element,
and we can set $K_\ell=\F_p(\zeta_\ell)\subset\F_{p^a}$
and let $\embedcyc{\ell}{m}$ be natural inclusion.

Nevertheless, the most useful cyclotomic lattices are those where $I$
is the whole set $\N\setminus p\N$.
It may seem odd to ask for such data, which in the end provides a representation of $\bar\F_p$,
as a prerequisite for a construction whose goal is precisely to represent $\bar\F_p$.
However we shall see that a relatively small cyclotomic sub-lattice is
enough to construct a much larger lattice of compatibly embedded
finite fields; it thus makes sense to assume that a cyclotomic lattice
is available, if it can be computed \emph{incrementally}.

\emph{Conway polynomials}~\cite{Nickel1988} offer a classic example of
cyclotomic lattice. %
The $a$-th Conway polynomial $C_a\in\F_p[X]$ is defined as the
\emph{lexicographically smallest} monic irreducible polynomial of
degree $a$ that is also \emph{primitive} (i.e., its roots generate
$\F_{p^a}^\times$) and \emph{norm compatible} (i.e,
$$C_a\Bigl(X^{\frac{p^b-1}{p^a-1}}\Bigr) = 0 \mod C_b$$
whenever $a$ divides $b$). %
The cyclotomic lattice is defined first by letting $\zeta_{p^a-1}$ be the
image of $X$ in $K_{p^a-1}=\F_{p^a}=\F_p[X]/C_a$ for any $a$;
this is then extended to all $\ell\in I$ by setting $K_\ell=K_{p^a-1}$
and $\zeta_\ell=(\zeta_{p^a-1})^{\frac{p^a-1}{\ell}}$
where $\F_{p^a}$ is the smallest extension $\F_p$ containing $\ell$-th roots of unity.

The best known algorithm to compute Conway polynomials has exponential
complexity~\cite{heath+loehr99}, hence they are usually precomputed
and tabulated up to a certain bound. %
Most computer algebra systems switch to other ways of representing
finite fields when the tables of Conway polynomials are not enough. %
A notable exception is SageMath~\cite{Sage} (since version
5.13~\cite{Roe2013}), that defines \emph{pseudo-Conway polynomials} by
relaxing the ``lexicographically first'' requirement; although
easier to compute in practice, their computation still requires an
exponential amount of work.

Other ways to construct cyclotomic lattices are possible. %
One may, for example, factor cyclotomic polynomials over $\F_p$, being
careful to maintain compatibility. %
In the next sections we shall not suppose any cyclotomic lattice
construction in particular, and simply assume that we are given a
collection $\system$ that satisfies the properties given at the
beginning of this paragraph.

\section{The Lenstra-Allombert algorithm}
\label{sec:lenstra}

We now review the theory behind Allombert's
adaptation~\cite{Allombert02} of Lenstra's isomorphism
algorithm~\cite{LenstraJr91}. %
This will be our stepping stone towards the definition of some
\emph{standard} elements in field extensions of $\F_p$ with an
effective compatibility condition.

The main ingredient of the algorithm is an extension of Kummer
theory. %
Because of this, the algorithm is limited to field extensions
$\F_{p^\ell}$ of degree $\ell$ prime to $p$. %
The easier case of extensions of degree $p^e$ is covered in a similar
way using Artin-Schreier theory, and the generic case is solved by
separately computing isomorphisms for the power-of-$p$ and the
prime-to-$p$ parts, and then tensoring the results together. %
Due to space constraints, we will not give details for the general
case here; see~\cite{LenstraJr91,Allombert02,brieulle2018computing}.

For any finite extension of $\F_{p}$, we denote by $ \sigma:x\mapsto x^p $
the Frobenius automorphism.
Let $\ell$ be an integer not divisible by $p$.
Then $\sigma$ is an $\F_{p}$-linear endomorphism of $\F_{p^\ell}$
with minimal polynomial $T^\ell-1$, separable but not necessarily split,
i.e., $\F_{p^\ell}$ is not necessarily a Kummer extension of $\F_{p}$.
We extend scalars and work in the
\emph{Kummer algebra of degree~$\ell$}:
\[
  A_\ell = \F_{p^\ell}\otimes\F_{p}(\zeta_\ell),
\]
where $\otimes$ is the tensor product over $\F_{p}$, and $\zeta_\ell$ is a primitive
$\ell$-th root of unity, taken from the given cyclotomic lattice. We
call $\F_{p}(\zeta_\ell)$ the \emph{field of scalars} of $A_\ell$, and
we define the \emph{level} of $A_\ell$ as
\[
  \nu(\ell) = \mathrm{ord}_{(\mathbb{Z}/\ell\mathbb{Z})^\times}(p) = [\F_{p}(\zeta_\ell):\F_{p}],
\]
that is, the degree of its field of scalars.

Now $\sigma\otimes1$ is a $1\otimes\F_{p}(\zeta_\ell)$-linear endomorphism of $A_\ell$
with $\ell$ distinct eigenvalues, namely the powers of $1\otimes\zeta_\ell$.
Thus, if $\eta=\zeta_\ell^i$ is any $\ell$-th root of unity in $\F_{p}(\zeta_\ell)$,
the corresponding eigenspace is defined by the \emph{Hilbert 90 equation for $\eta$}:
\begin{equation}
  \tag{H90}
 (\sigma\otimes1)(x) = (1\otimes\eta)x,
  \label{h90}
\end{equation}
which plays the role of $\sigma(x)=\eta x$ in classical Kummer theory.
The solutions of~\eqref{h90} in $A_\ell$ form a
$1\otimes\F_{p}(\zeta_{\ell})$-vector space of dimension $1$,
and if $x$ is such a solution for $\eta$, then $x^j$ is a solution for $\eta^j$.

In particular, let $\alpha_\ell$ be a nonzero solution of~\eqref{h90} for $\zeta_\ell$.
Then $1,\alpha_\ell,\cdots,(\alpha_\ell)^{\ell-1}$ are eigenvectors for distinct eigenvalues and thus
form a basis of $A_\ell$ over $1\otimes\F_{p}(\zeta_{\ell})$.
Likewise
\[ (\alpha_\ell)^\ell = 1\otimes\a_\ell \]
for some scalar $\a_\ell\in\F_{p}(\zeta_\ell)$,
that we shall call the \emph{Kummer constant} of $\alpha_\ell$.
This proves:
\begin{proposition}
\label{alphagen}
Any nonzero solution $\alpha_\ell$ of \eqref{h90} for $\zeta_\ell$ is a generating element for $A_\ell$ as an algebra over $1\otimes\F_{p}(\zeta_{\ell})$,
inducing an isomorphism
\[ A_\ell\simeq \F_{p}(\zeta_{\ell})[T]/(T^\ell-\a_\ell). \]
\end{proposition}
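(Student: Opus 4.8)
The plan is to exhibit the explicit algebra map $T\mapsto\alpha_\ell$ and then force it to be an isomorphism by a dimension count, so that everything reduces to checking that $1,\alpha_\ell,\dots,(\alpha_\ell)^{\ell-1}$ is a basis of $A_\ell$ over $1\otimes\F_p(\zeta_\ell)$. First I would record that $A_\ell$ has dimension exactly $\ell$ over its field of scalars: since $\F_{p^\ell}$ has $\F_p$-dimension $\ell$, the tensor product $\F_{p^\ell}\otimes_{\F_p}\F_p(\zeta_\ell)$ is free of rank $\ell$ as a module over $1\otimes\F_p(\zeta_\ell)$, and $1\otimes\F_p(\zeta_\ell)$ is a field (the injective image of the field $\F_p(\zeta_\ell)$).

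Next I would establish the linear independence of the powers. The map $\sigma\otimes1$ is a ring endomorphism of $A_\ell$ fixing $1\otimes\F_p(\zeta_\ell)$ pointwise, so from the defining relation $(\sigma\otimes1)(\alpha_\ell)=(1\otimes\zeta_\ell)\alpha_\ell$ I get, applying it to a power, $(\sigma\otimes1)\bigl((\alpha_\ell)^j\bigr)=(1\otimes\zeta_\ell^j)(\alpha_\ell)^j$; that is, each $(\alpha_\ell)^j$ is an eigenvector of $\sigma\otimes1$ for the eigenvalue $1\otimes\zeta_\ell^j$. Because $\ell$ is prime to $p$ and $\zeta_\ell$ is primitive, the $\ell$ scalars $1\otimes\zeta_\ell^j$ for $0\le j<\ell$ are pairwise distinct, so the corresponding eigenvectors are linearly independent over the field $1\otimes\F_p(\zeta_\ell)$, and by the dimension count they form a basis.

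The one point needing care is that these eigenvectors are all nonzero, equivalently that $\alpha_\ell$ is a unit; otherwise some power could vanish and break the independence argument. I would handle this by noting that $A_\ell$ is a tensor product of finite separable extensions of $\F_p$, hence a reduced (indeed \'etale) $\F_p$-algebra, so the nonzero element $\alpha_\ell$ cannot be nilpotent and all its powers are nonzero; in fact $(\alpha_\ell)^\ell=1\otimes\a_\ell$ then forces $\a_\ell\neq0$, making $\alpha_\ell$ invertible. I expect this bookkeeping to be the only subtle step, the rest being the eigenvalue and dimension argument already sketched in the text.

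Finally, the isomorphism. Having a basis $1,\alpha_\ell,\dots,(\alpha_\ell)^{\ell-1}$ shows that $\alpha_\ell$ generates $A_\ell$ as an $\F_p(\zeta_\ell)$-algebra, so the algebra homomorphism $\phi:\F_p(\zeta_\ell)[T]\to A_\ell$ sending $T\mapsto\alpha_\ell$ is surjective. Since $(\alpha_\ell)^\ell=1\otimes\a_\ell$, the polynomial $T^\ell-\a_\ell$ lies in $\ker\phi$, so $\phi$ factors through a surjection $\F_p(\zeta_\ell)[T]/(T^\ell-\a_\ell)\twoheadrightarrow A_\ell$. Both sides have dimension $\ell$ over $\F_p(\zeta_\ell)$, so this surjection is an isomorphism, which is exactly the claim.
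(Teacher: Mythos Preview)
Your proof is correct and follows essentially the same route as the paper: the text preceding the proposition argues that $1,\alpha_\ell,\dots,(\alpha_\ell)^{\ell-1}$ are eigenvectors of $\sigma\otimes1$ for the distinct eigenvalues $1\otimes\zeta_\ell^j$ and hence form a basis of the $\ell$-dimensional $1\otimes\F_p(\zeta_\ell)$-space $A_\ell$, and the paragraph just after the proposition invokes the \'etale property to ensure $\a_\ell\neq0$ (equivalently, that the powers are nonzero). You have simply made the dimension count and the reducedness step more explicit.
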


Since $A_\ell$ is known to be an \'etale algebra, 
$\alpha_\ell$ being nonzero implies that $\a_\ell$ is nonzero, which in turn implies
that $\alpha_\ell$ is \emph{invertible} in $A_\ell$, indeed $\;(\alpha_\ell)^{-1} = (1\otimes\a_\ell^{-1})(\alpha_\ell)^{\ell-1}$.


We will make frequent use of the following:
\begin{lemma}
\label{FrobFrob}
Let $K,L$ be two finite extensions of $\F_{p}$.
Then, for any $\beta\in K\otimes L$,
we have $(\sigma\otimes\sigma)(\beta)=\beta^p$.
\end{lemma}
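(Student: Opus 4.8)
The plan is to reduce the identity to pure tensors, exploiting that both sides of the claimed equality are additive maps on $K\otimes L$. First I would record that, since $K\otimes L$ is a commutative $\F_{p}$-algebra of characteristic $p$, the $p$-th power map $\beta\mapsto\beta^p$ is \emph{additive}: this is the ``freshman's dream'' $(\beta+\gamma)^p=\beta^p+\gamma^p$, valid because $\binom{p}{i}\equiv 0 \pmod p$ for $0<i<p$. On the other hand, $\sigma\otimes\sigma$ is $\F_{p}$-linear by construction, hence in particular additive. Since $K\otimes L$ is spanned over $\F_{p}$ by the pure tensors $a\otimes b$ with $a\in K$, $b\in L$, it suffices to verify the identity $(\sigma\otimes\sigma)(a\otimes b)=(a\otimes b)^p$ on such elements, and additivity then propagates it to every $\beta$.

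On a pure tensor the verification is immediate. By definition of the multiplication in the tensor product algebra, $(a\otimes b)^p = a^p\otimes b^p$, whereas $(\sigma\otimes\sigma)(a\otimes b)=\sigma(a)\otimes\sigma(b)=a^p\otimes b^p$ by definition of $\sigma$. The two expressions coincide, so the identity holds on the spanning set, and hence on all of $K\otimes L$.

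There is no genuine obstacle here; the single point demanding care is the additivity of the $p$-th power map, which is what makes it legitimate to test the equality only on the pure tensors. This step tacitly uses both the commutativity of $K\otimes L$ and its characteristic $p$, and once it is in place the statement follows from the elementary behaviour of $\sigma$ on generators of the algebra.
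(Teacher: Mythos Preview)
Your proof is correct and follows exactly the same approach as the paper: verify the identity on elementary tensors $u\otimes v$, where both sides equal $u^p\otimes v^p$, and then extend by additivity using that we are in characteristic $p$. The paper compresses this into two sentences, but the content is identical.
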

\begin{proof}
If $\beta=u\otimes v$ is an elementary tensor we have $(\sigma\otimes\sigma)(\beta)=u^p\otimes v^p=\beta^p$.
This then extends by linearity since we're in characteristic $p$. 
\end{proof}
In this generality we also introduce the following notation: if $\eta\in L$ has degree $d$ over $\F_{p}$,
then any $\beta\in K\otimes\F_{p}(\eta)\subset K\otimes L$ decomposes uniquely as $\beta = \sum_{i =  0}^{d-1}y_i\otimes\eta^i$,
and we set  \[ \first{\beta}{\eta}=y_0. \]
In particular, coming back to $A_\ell$, if we write
\[ \alpha_\ell = \sum_{i=0}^{a-1}x_i\otimes\zeta_{\ell}^i \] where $a=\nu(\ell)$,
it is shown in~\cite{Allombert02} that $x_0=\first{\alpha_\ell}{\zeta_\ell}$ is a \emph{generating element} for the extension
$\F_{p^\ell}/\F_{p}$.
Moreover, \cite{Allombert02} (see also~\cite{brieulle2018computing}) provides the following equations
that allow, in the opposite direction, to recover $\alpha_\ell$ from $x_0$:
\begin{equation}
\label{recoveralpha}
\begin{array}{l}
x_{a-1}=\sigma(x_0)/b_0\\
x_{i}\quad\,=\sigma(x_{i+1})-b_{i+1}x_{a-1}\quad\text{for $i=a\!-\!2$ down to $1$}
\end{array}
\end{equation}
where $\zeta_\ell^a=\sum_{i=0}^{a-1}b_i\zeta_\ell^i$ is the minimal equation for $\zeta_\ell$.

\begin{proposition}
\label{depend}
With the notations above, there are precisely $\ell$ elements $x\in A_\ell$ that are solutions of~\eqref{h90} for $\zeta_\ell$
and satisfy $x^\ell=1\otimes\a_\ell$, namely, these are the $(1\otimes\zeta_\ell)^u\alpha_\ell=(\sigma^u\otimes 1)(\alpha_\ell)$
for $0\leq u<\ell$.
The corresponding generating elements for $\F_{p^\ell}/\F_{p}$ are the $\first{(\sigma^u\otimes 1)(\alpha_\ell)}{\zeta_\ell}=\sigma^u(x_0)$;
they all have the same minimal polynomial, which is a generating polynomial for $\F_{p^\ell}/\F_{p}$ depending only on $\a_\ell$.
\end{proposition}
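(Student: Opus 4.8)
The plan is to work directly with the one-dimensional description of the solution space of \eqref{h90} and then cut it down with the $\ell$-th power condition. I would first recall from the discussion preceding Proposition~\ref{alphagen} that the solutions of \eqref{h90} for $\zeta_\ell$ form a line over $1\otimes\F_{p}(\zeta_\ell)$, so since $\alpha_\ell$ is a nonzero such solution, every solution can be written uniquely as $x=(1\otimes c)\alpha_\ell$ with $c\in\F_{p}(\zeta_\ell)$. Raising to the $\ell$-th power and using $(\alpha_\ell)^\ell=1\otimes\a_\ell$ gives
\[ x^\ell=(1\otimes c^\ell)(\alpha_\ell)^\ell=1\otimes(c^\ell\a_\ell). \]
Since $A_\ell$ is \'etale we have $\a_\ell\neq0$, so imposing $x^\ell=1\otimes\a_\ell$ is equivalent to $c^\ell=1$ (the value $c=0$ being excluded, as it would force $\a_\ell=0$). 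As $\gcd(\ell,p)=1$ and $\zeta_\ell$ is a primitive $\ell$-th root of unity already lying in $\F_{p}(\zeta_\ell)$, the equation $c^\ell=1$ has exactly the $\ell$ roots $c=\zeta_\ell^u$ for $0\leq u<\ell$. This yields both the count and the description $x=(1\otimes\zeta_\ell)^u\alpha_\ell$.

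Next I would identify these elements with Frobenius translates. Because $\alpha_\ell$ satisfies \eqref{h90}, i.e.\ $(\sigma\otimes1)(\alpha_\ell)=(1\otimes\zeta_\ell)\alpha_\ell$, and $\sigma\otimes1$ is a ring endomorphism of $A_\ell$ fixing $1\otimes\zeta_\ell$, a one-line induction gives $(\sigma^u\otimes1)(\alpha_\ell)=(1\otimes\zeta_\ell)^u\alpha_\ell$, matching the stated identity. For the associated generating elements I would write $\alpha_\ell=\sum_{i=0}^{a-1}x_i\otimes\zeta_\ell^i$ with $a=\nu(\ell)$, so that $(\sigma^u\otimes1)(\alpha_\ell)=\sum_{i=0}^{a-1}\sigma^u(x_i)\otimes\zeta_\ell^i$; the definition of the first-coordinate map then reads off $\first{(\sigma^u\otimes1)(\alpha_\ell)}{\zeta_\ell}=\sigma^u(x_0)$.

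Finally, since $x_0=\first{\alpha_\ell}{\zeta_\ell}$ generates $\F_{p^\ell}/\F_{p}$ (as recalled from~\cite{Allombert02}), it has degree $\ell$, and its $\ell$ Frobenius conjugates $\sigma^u(x_0)$ are exactly the distinct roots of its minimal polynomial; being conjugate over $\F_{p}$, they all share this one irreducible degree-$\ell$ polynomial, which is therefore a generating polynomial for $\F_{p^\ell}/\F_{p}$. That it depends only on $\a_\ell$ follows because the first two paragraphs characterize these $\ell$ elements \emph{intrinsically} in terms of the fixed $\zeta_\ell$ and of $\a_\ell$ alone: any solution of \eqref{h90} for $\zeta_\ell$ whose $\ell$-th power is $1\otimes\a_\ell$ is one of the $(\sigma^u\otimes1)(\alpha_\ell)$, so no choice beyond $\a_\ell$ enters the resulting polynomial. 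The only step demanding genuine care is the reduction to $c^\ell=1$ together with the remark that all $\ell$ of its roots already lie in the field of scalars $\F_{p}(\zeta_\ell)$; the remainder is linear algebra in a one-dimensional space and the standard Galois theory of finite fields.
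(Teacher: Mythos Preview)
Your proof is correct and follows exactly the same approach as the paper: use the one-dimensionality of the \eqref{h90} eigenspace to write any solution as $(1\otimes\xi)\alpha_\ell$, then impose the $\ell$-th power condition to force $\xi^\ell=1$. The paper's proof is essentially your first paragraph followed by ``from which all assertions follow''; you have simply spelled out those remaining assertions (the identification with $(\sigma^u\otimes1)(\alpha_\ell)$, the first-coordinate computation, and the dependence only on $\a_\ell$) in full, which is fine.
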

\begin{proof}
The solutions of~\eqref{h90} for $\zeta_\ell$ form a $1\otimes\F_{p}(\zeta_{\ell})$-vector space of dimension $1$,
thus they all are of the form $x=(1\otimes\xi)\alpha_\ell$.
Adding the condition $x^\ell=1\otimes\a_\ell$ then forces $\xi^\ell=1$,
from which all assertions follow.
\end{proof}

Now we consider Kummer algebras of various degrees.
Since we assumed that the fields of scalars are defined from a cyclotomic lattice $\system$,
they are compatibly embedded:
for $\ell\,|\,m$ prime to $p$, we have the embedding
\[
\begin{array}{cccc}
  \embedcyc{\ell}{m}: & \F_{p}(\zeta_\ell) & \hookrightarrow & \F_{p}(\zeta_m) \\
  & \zeta_\ell & \mapsto & (\zeta_{m})^{\frac{m}{\ell}}.
\end{array}
\]

It is easily shown that, as an $\F_{p}$-algebra, $A_\ell$ is isomorphic to a product of copies of $\F_{p^\ell}(\zeta_\ell)$,
and $A_m$ to a product of copies of $\F_{p^m}(\zeta_m)$.
This allows us to describe all $\F_{p}$-algebra morphisms from $A_\ell$ to $A_m$. However here we will focus only on a certain
subclass of them:
\begin{definition}
\label{Kembedding}
A \emph{Kummer embedding} of $A_\ell$ into $A_m$ is an \emph{injective} $\F_{p}$-algebra morphism $\Phi:A_\ell\hookrightarrow A_m$
such that:
\begin{itemize}
\item $\Phi$ extends the scalar embedding $1\otimes\embedcyc{\ell}{m}$
\item $\Phi$ commutes with $\sigma\otimes1$.
\end{itemize}
\end{definition}

\begin{proposition}
\label{Phialpha}
Let $\alpha_\ell\in A_\ell$ be a nonzero solution of~\eqref{h90} for $\zeta_\ell$, with Kummer constant $\a_\ell$.
Then, there is a $1$-to-$1$ correspondence between Kummer embeddings $\Phi:A_\ell\hookrightarrow A_m$ and solutions $\hat{\alpha}\in A_m$
of~\eqref{h90} for $(\zeta_m)^{\frac{m}{\ell}}$ that satisfy $(\hat{\alpha})^\ell=1\otimes\embedcyc{\ell}{m}(\a_\ell)$,
given by \[ \Phi\quad\longleftrightarrow\quad\hat{\alpha}=\Phi(\alpha_\ell). \]
\end{proposition}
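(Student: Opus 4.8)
The plan is to exhibit the claimed bijection as a pair of mutually inverse maps: the forward map $\Phi\mapsto\hat\alpha=\Phi(\alpha_\ell)$, and the inverse sending an admissible $\hat\alpha$ to the unique Kummer embedding it determines. First I would check that the forward map lands in the announced set. Given a Kummer embedding $\Phi$, apply it to the eigenvector identity $(\sigma\otimes1)(\alpha_\ell)=(1\otimes\zeta_\ell)\alpha_\ell$; using that $\Phi$ commutes with $\sigma\otimes1$ and extends $1\otimes\embedcyc{\ell}{m}$, so that $\Phi(1\otimes\zeta_\ell)=1\otimes(\zeta_m)^{m/\ell}$, one gets $(\sigma\otimes1)(\hat\alpha)=(1\otimes(\zeta_m)^{m/\ell})\hat\alpha$, i.e.\ $\hat\alpha$ solves \eqref{h90} for $(\zeta_m)^{m/\ell}$. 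Applying $\Phi$ to $(\alpha_\ell)^\ell=1\otimes\a_\ell$ and using that $\Phi$ is a ring morphism extending the scalars gives $(\hat\alpha)^\ell=1\otimes\embedcyc{\ell}{m}(\a_\ell)$. Both are immediate from the two defining properties of Definition~\ref{Kembedding}.

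For the inverse direction I would invoke Proposition~\ref{alphagen}, which presents $A_\ell$ as $\F_p(\zeta_\ell)[T]/(T^\ell-\a_\ell)$ with $T\mapsto\alpha_\ell$. Given any $\hat\alpha$ as in the statement, the relation $(\hat\alpha)^\ell=1\otimes\embedcyc{\ell}{m}(\a_\ell)$ is precisely what the universal property of this quotient requires in order to produce an $\F_p$-algebra morphism $\Phi\colon A_\ell\to A_m$ that extends $1\otimes\embedcyc{\ell}{m}$ and satisfies $\Phi(\alpha_\ell)=\hat\alpha$. Since $\alpha_\ell$ generates $A_\ell$ over the scalars, such a $\Phi$ is unique, so once it is shown to be a Kummer embedding the two maps are automatically mutually inverse.

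It then remains to verify the two conditions of Definition~\ref{Kembedding} for this $\Phi$. Commutation with $\sigma\otimes1$ I would check only on the generator $\alpha_\ell$: because $\sigma\otimes1$ fixes $1\otimes\F_p(\zeta_\ell)$ on $A_\ell$ and $1\otimes\F_p(\zeta_m)$ on $A_m$, both $\Phi\circ(\sigma\otimes1)$ and $(\sigma\otimes1)\circ\Phi$ are $\F_p(\zeta_\ell)$-algebra morphisms $A_\ell\to A_m$, and on $\alpha_\ell$ both evaluate to $(1\otimes(\zeta_m)^{m/\ell})\hat\alpha$ (the first by the scalar compatibility, the second by \eqref{h90} for $\hat\alpha$), hence they coincide everywhere.

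The main obstacle is injectivity, since an algebra morphism out of the \'etale algebra $A_\ell$ could a priori annihilate some of its field factors. Here I would first note that $\hat\alpha$ is a unit, as its $\ell$-th power $1\otimes\embedcyc{\ell}{m}(\a_\ell)$ is a nonzero, hence invertible, scalar (because $\a_\ell\neq0$ and $\embedcyc{\ell}{m}$ is injective). Therefore $1,\hat\alpha,\dots,(\hat\alpha)^{\ell-1}$ are all nonzero and are eigenvectors of $\sigma\otimes1$ for the eigenvalues $(1\otimes(\zeta_m)^{m/\ell})^j$, which are pairwise distinct since $(\zeta_m)^{m/\ell}$ has order $\ell$; they are thus linearly independent over $1\otimes\F_p(\zeta_m)$, and in particular over the image of the scalars. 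As $\Phi$ carries the basis $1,\alpha_\ell,\dots,(\alpha_\ell)^{\ell-1}$ of $A_\ell$ onto these independent elements, it is injective. This establishes that $\Phi$ is a Kummer embedding and completes the bijection.
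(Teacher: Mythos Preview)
Your argument is correct and is precisely the unpacking of the paper's one-line proof (``Direct consequence of Proposition~\ref{alphagen} and Definition~\ref{Kembedding}''): you use the presentation $A_\ell\simeq\F_p(\zeta_\ell)[T]/(T^\ell-\a_\ell)$ from Proposition~\ref{alphagen} to set up the bijection and then verify the two conditions of Definition~\ref{Kembedding}, including the injectivity step via the eigenvector argument that the paper leaves implicit.
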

\begin{proof}
Direct consequence of Proposition~\ref{alphagen} and Definition~\ref{Kembedding}.
\end{proof}

Actually, Kummer embeddings are easily characterized:
\begin{proposition}
\label{Phiphi}
There is a natural $1$-to-$1$ correspondence between Kummer embeddings $\Phi:A_\ell\hookrightarrow A_m$
and embeddings of finite fields $\phi:\F_{p^\ell}\hookrightarrow\F_{p^m}$, given by:
\begin{itemize}
\item If $\Phi$ is a Kummer embedding, then $\Phi$ maps $\F_{p^\ell}\otimes1$ into $\F_{p^m}\otimes 1$.
Thus the restriction of $\Phi$ to $\F_{p^\ell}\otimes1$ is of the form $\phi\otimes1$ for some $\phi:\F_{p^\ell}\hookrightarrow\F_{p^m}$.
\item Conversely, if $\phi:\F_{p^\ell}\hookrightarrow\F_{p^m}$ is an embedding of finite fields, then $\Phi=\phi\otimes\embedcyc{\ell}{m}$
is a Kummer embedding.
\end{itemize}
Moreover, this correspondence commutes with composition of embeddings.
\end{proposition}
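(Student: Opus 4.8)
The plan is to identify, intrinsically, the subalgebra $\F_{p^\ell}\otimes1$ inside $A_\ell$ as the fixed locus of the partial Frobenius $1\otimes\sigma$ acting on the second tensor factor, and then to show that any Kummer embedding automatically commutes with this operator. First I would record the characterization $\F_{p^\ell}\otimes1=\{\beta\in A_\ell:(1\otimes\sigma)(\beta)=\beta\}$. This holds because the fixed field of the Frobenius generator $\sigma$ of $\F_{p}(\zeta_\ell)/\F_{p}$ is $\F_{p}$, and because tensoring with the free (hence flat) $\F_{p}$-module $\F_{p^\ell}$ commutes with taking the kernel of $1\otimes(\sigma-1)$; thus $(A_\ell)^{1\otimes\sigma}=\F_{p^\ell}\otimes\F_{p}(\zeta_\ell)^{\sigma}=\F_{p^\ell}\otimes1$, and likewise in $A_m$.

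For the first bullet, the crux is to deduce that $\Phi$ commutes with $1\otimes\sigma$ from the two defining properties of a Kummer embedding. The point is that $1\otimes\sigma$ is not part of the definition, but it can be expressed through operators that are: by Lemma~\ref{FrobFrob} the absolute $p$-th power map on $A_\ell$ equals $\sigma\otimes\sigma$, while $\sigma\otimes1$ is invertible since $\sigma$ is an automorphism of $\F_{p^\ell}$, so $1\otimes\sigma=(\sigma\otimes1)^{-1}\circ(\sigma\otimes\sigma)$. Now $\Phi$ commutes with $\sigma\otimes1$ by hypothesis, hence with its inverse, and it commutes with the $p$-th power map because it is an $\F_{p}$-algebra morphism; composing these gives $\Phi\circ(1\otimes\sigma)=(1\otimes\sigma)\circ\Phi$. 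Since $\Phi$ intertwines the operator $1\otimes\sigma$ on $A_\ell$ with the one on $A_m$, it sends fixed points to fixed points, so $\Phi(\F_{p^\ell}\otimes1)\subseteq\F_{p^m}\otimes1$. The restriction is then an $\F_{p}$-algebra map between fields, i.e.\ an embedding $\phi:\F_{p^\ell}\hookrightarrow\F_{p^m}$ with $\Phi|_{\F_{p^\ell}\otimes1}=\phi\otimes1$.

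For the converse bullet I would check directly that $\Phi=\phi\otimes\embedcyc{\ell}{m}$ is a Kummer embedding: it is an $\F_{p}$-algebra morphism as a tensor product of such, it is injective because a tensor product over the field $\F_{p}$ of two injective linear maps is injective, it extends $1\otimes\embedcyc{\ell}{m}$ since $\phi(1)=1$, and it commutes with $\sigma\otimes1$ because every field embedding commutes with Frobenius ($\phi(u^p)=\phi(u)^p$). To see the two constructions are mutually inverse, note that $A_\ell$ is generated as an $\F_{p}$-algebra by $\F_{p^\ell}\otimes1$ and $1\otimes\F_{p}(\zeta_\ell)$; a Kummer embedding is therefore determined by its restrictions to these two subalgebras, which for both $\Phi$ and $\phi\otimes\embedcyc{\ell}{m}$ are $\phi\otimes1$ and $1\otimes\embedcyc{\ell}{m}$, while restricting $\phi\otimes\embedcyc{\ell}{m}$ to $\F_{p^\ell}\otimes1$ recovers $\phi$.

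Finally, compatibility with composition is immediate from the explicit form: for $\phi:\F_{p^\ell}\hookrightarrow\F_{p^m}$ and $\psi:\F_{p^m}\hookrightarrow\F_{p^n}$ one has $(\psi\otimes\embedcyc{m}{n})\circ(\phi\otimes\embedcyc{\ell}{m})=(\psi\circ\phi)\otimes(\embedcyc{m}{n}\circ\embedcyc{\ell}{m})=(\psi\circ\phi)\otimes\embedcyc{\ell}{n}$, using the transitivity $\embedcyc{m}{n}\circ\embedcyc{\ell}{m}=\embedcyc{\ell}{n}$ built into the cyclotomic lattice $\system$. I expect the only genuinely delicate step to be the first one: recognizing that $1\otimes\sigma$, although absent from the definition of a Kummer embedding, is forced to be preserved because it factors through $\sigma\otimes1$ and the absolute Frobenius via Lemma~\ref{FrobFrob}; everything else is bookkeeping on tensor products.
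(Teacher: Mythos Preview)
Your proof is correct and follows essentially the same route as the paper: the key step in both is to factor $1\otimes\sigma=(\sigma\otimes1)^{-1}\circ(\sigma\otimes\sigma)$ via Lemma~\ref{FrobFrob}, deduce that a Kummer embedding commutes with $1\otimes\sigma$, and conclude that it preserves the fixed subalgebra $\F_{p^\ell}\otimes1$. Your write-up is simply more detailed, spelling out the fixed-point characterization, the mutual inversity of the two constructions, and the compatibility with composition that the paper leaves as ``clear''.
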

\begin{proof}
Let $\Phi$ be a Kummer embedding. Being a $\F_{p}$-algebra morphism, it satisfies $\Phi(\beta^p)=\Phi(\beta)^p$ for all $\beta\in A_\ell$.
By Lemma~\ref{FrobFrob}, this means that $\Phi$ commutes with $\sigma\otimes\sigma$, and thus,
also with $(\sigma\otimes 1)^{-1}\circ(\sigma\otimes\sigma)=1\otimes\sigma$.
This implies that $\Phi$ maps $\F_{p^\ell}\otimes1$ into $\F_{p^m}\otimes 1$.
The other assertions are clear.
\end{proof}

\begin{corollary}
\label{alphaphi}
Let $\alpha_\ell\in A_\ell$ be a nonzero solution of~\eqref{h90} for $\zeta_\ell$,
with Kummer constant $\a_\ell$, and
let $\hat{\alpha}\in A_m$ be a solution of~\eqref{h90} for $(\zeta_m)^{\frac{m}{\ell}}$
that satisfies $(\hat{\alpha})^\ell=1\otimes\embedcyc{\ell}{m}(\a_\ell)$.
Then:
\begin{itemize}
\item
  $\hat{\alpha}\in\F_{p^m}\otimes\F_{p}((\zeta_m)^{\frac{m}{\ell}})\;\subset\;A_m$;
\item the assignation $\first{\alpha_{\ell}}{\zeta_\ell}\mapsto\first{\hat{\alpha}}{(\zeta_m)^{\frac{m}{\ell}}}$
defines an embedding $\phi:\F_{p^\ell}\hookrightarrow\F_{p^m}$;
\item $\Phi=\phi\otimes\embedcyc{\ell}{m}$ is the unique Kummer embedding such that $\Phi(\alpha_\ell)=\hat{\alpha}$.
\end{itemize}
\end{corollary}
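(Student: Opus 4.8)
The plan is to deduce the corollary directly from Propositions~\ref{Phialpha} and~\ref{Phiphi}, reserving only a short computation for the explicit description of $\phi$. First I would invoke Proposition~\ref{Phialpha}: the element $\hat{\alpha}$ is by hypothesis exactly a solution of~\eqref{h90} for $(\zeta_m)^{\frac{m}{\ell}}$ satisfying $(\hat{\alpha})^\ell=1\otimes\embedcyc{\ell}{m}(\a_\ell)$, hence it corresponds to a unique Kummer embedding $\Phi\colon A_\ell\hookrightarrow A_m$ with $\Phi(\alpha_\ell)=\hat{\alpha}$. Proposition~\ref{Phiphi} then writes $\Phi=\phi\otimes\embedcyc{\ell}{m}$ for a unique embedding of finite fields $\phi\colon\F_{p^\ell}\hookrightarrow\F_{p^m}$. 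This is precisely the third bullet, and it is this $\phi$ that the other two bullets refer to; everything else is a matter of unwinding what $\Phi=\phi\otimes\embedcyc{\ell}{m}$ does to $\alpha_\ell$.

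For the first two bullets I would unwind $\hat{\alpha}=\Phi(\alpha_\ell)$ on the level basis. Writing $\alpha_\ell=\sum_{i=0}^{a-1}x_i\otimes\zeta_\ell^i$ with $a=\nu(\ell)$, and setting $\eta=(\zeta_m)^{\frac{m}{\ell}}$, the defining formula $\embedcyc{\ell}{m}(\zeta_\ell)=\eta$ gives $\hat{\alpha}=\sum_{i=0}^{a-1}\phi(x_i)\otimes\eta^i$, which plainly lies in $\F_{p^m}\otimes\F_p(\eta)\subset A_m$; this is the first bullet. Since $\eta$ is a primitive $\ell$-th root of unity, $[\F_p(\eta):\F_p]=\nu(\ell)=a$, so $\{1,\eta,\dots,\eta^{a-1}\}$ is an $\F_p$-basis of $\F_p(\eta)$ and the displayed expression is exactly the canonical decomposition defining $\first{\cdot}{\eta}$. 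Reading off the constant term yields $\first{\hat{\alpha}}{\eta}=\phi(x_0)=\phi(\first{\alpha_\ell}{\zeta_\ell})$. Because $x_0=\first{\alpha_\ell}{\zeta_\ell}$ is a generating element of $\F_{p^\ell}/\F_p$ and $\phi$ is a field embedding, the assignation $\first{\alpha_\ell}{\zeta_\ell}\mapsto\first{\hat{\alpha}}{\eta}$ is simply $\phi$ evaluated at a generator, hence well defined and equal to $\phi$; this is the second bullet.

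The argument is essentially bookkeeping once the two correspondences are in hand, so I do not expect a genuine obstacle. The only point needing care is the compatibility of the $\first{\cdot}{\cdot}$ operators across the embedding: one must check that $\embedcyc{\ell}{m}$ carries the power basis $\{\zeta_\ell^i\}$ to $\{\eta^i\}$ and that $\eta$ has degree exactly $a=\nu(\ell)$, so that the two decompositions match coefficient by coefficient and taking constant terms commutes with applying $\Phi$. Everything else is a direct appeal to Propositions~\ref{Phialpha} and~\ref{Phiphi}.
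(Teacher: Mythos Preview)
Your proposal is correct and follows essentially the same approach as the paper: invoke Proposition~\ref{Phialpha} to get the unique Kummer embedding $\Phi$ with $\Phi(\alpha_\ell)=\hat\alpha$, use Proposition~\ref{Phiphi} to write $\Phi=\phi\otimes\embedcyc{\ell}{m}$, then expand $\alpha_\ell$ in the $\zeta_\ell$-basis and apply $\Phi$ term by term to read off both the containment $\hat\alpha\in\F_{p^m}\otimes\F_p((\zeta_m)^{m/\ell})$ and the identity $\first{\hat\alpha}{(\zeta_m)^{m/\ell}}=\phi(\first{\alpha_\ell}{\zeta_\ell})$. Your extra care in checking that $\eta=(\zeta_m)^{m/\ell}$ has degree exactly $a=\nu(\ell)$, so that the $\first{\cdot}{\eta}$ decomposition is well defined and matches the one coming through $\Phi$, is a welcome clarification but not a departure from the paper's argument.
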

\begin{proof}
By Proposition~\ref{Phialpha} there is a unique Kummer embedding $\Phi$ such that $\Phi(\alpha_\ell)=\hat{\alpha}$.
By Proposition~\ref{Phiphi} we have that $\Phi=\phi\otimes\embedcyc{\ell}{m}$ for some $\phi:\F_{p^\ell}\hookrightarrow\F_{p^m}$.
Writing $\alpha_\ell=\sum_{i=0}^{a-1}x_i\otimes\zeta_{\ell}^i$,
it follows that \[ \hat{\alpha}=\Phi(\alpha_\ell)=\sum_{i=0}^{a-1}\phi(x_i)\otimes(\zeta_m)^{\frac{mi}{\ell}}. \]
Thus $\first{\hat{\alpha}}{(\zeta_m)^{\frac{m}{\ell}}}=\phi(x_0)=\phi(\first{\alpha_{\ell}}{\zeta_\ell})$,
and, since $\first{\alpha_{\ell}}{\zeta_\ell}$ generates $\F_{p^\ell}$, this uniquely characterizes $\phi$.
\end{proof}

We can now state Allombert's algorithm and prove its correctness; we
give below a minor variation on the original algorithm, better adapted
to our more general setting.
\begin{algorithm}
  \caption{(Allombert's algorithm)}
  \label{algo:allombert}
  \begin{algorithmic}[1]
    \REQUIRE {$\F_{p^\ell}, \F_{p^m}$, for $\ell\,|\,m$ integers prime to $p$,
      and a cyclotomic lattice $\system[\{l,m\}]$.}
    \ENSURE {$s\in\F_{p^\ell}, t\in\F_{p^m}$, such that the assignation $s\mapsto t$
      defines an embedding $\phi:\F_{p^\ell}\hookrightarrow\F_{p^m}$.}
  \STATE Prepare the Kummer algebras $A_\ell$ and $A_m$.
  \STATE Find $\alpha_\ell\in A_\ell$ and $\alpha_m\in A_m$, nonzero solutions of \eqref{h90} for $\zeta_\ell$
  and $\zeta_m$ respectively.
  \STATE Compute their Kummer constants: $(\alpha_\ell)^\ell=1\otimes\a_\ell$ and
  $(\alpha_m)^m=1\otimes\a_m$.
  \STATE Compute $\kappa$, a $\ell$-th root of $\embedcyc{\ell}{m}(\a_\ell)/\a_m$.
  \STATE Return $\first{\alpha_{\ell}}{\zeta_\ell}$ and $\first{(1\otimes\kappa)(\alpha_m)^{\frac{m}{\ell}}}{(\zeta_m)^{\frac{m}{\ell}}}$.
  \end{algorithmic}
\end{algorithm}
\begin{proposition}
  Algorithm~\ref{algo:allombert} is correct: it returns elements that define an
  embedding $\phi:\F_{p^\ell}\hookrightarrow\F_{p^m}$.
\end{proposition}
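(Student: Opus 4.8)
The plan is to reduce everything to Corollary~\ref{alphaphi} by recognizing the element $\hat{\alpha}=(1\otimes\kappa)(\alpha_m)^{\frac{m}{\ell}}$ produced in step~5 as a valid input to that corollary. Concretely, I would take $\phi$ to be the embedding whose existence and characterization Corollary~\ref{alphaphi} asserts, and then observe that the two quantities returned by the algorithm are exactly $\first{\alpha_{\ell}}{\zeta_\ell}$ and $\first{\hat{\alpha}}{(\zeta_m)^{\frac{m}{\ell}}}$, i.e.\ the pair $(s,t)$ that defines $\phi$.

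First I would verify the two hypotheses of Corollary~\ref{alphaphi}. Since $\alpha_m$ solves~\eqref{h90} for $\zeta_m$, the remark that $x^j$ solves~\eqref{h90} for $\eta^j$ shows that $(\alpha_m)^{\frac{m}{\ell}}$ solves~\eqref{h90} for $(\zeta_m)^{\frac{m}{\ell}}$; multiplying by the scalar $1\otimes\kappa$ preserves this, because the solutions of~\eqref{h90} for a fixed eigenvalue form a $1\otimes\F_{p}(\zeta_m)$-vector space. Thus $\hat{\alpha}$ solves~\eqref{h90} for $(\zeta_m)^{\frac{m}{\ell}}$. For the second hypothesis I compute $(\hat{\alpha})^\ell=(1\otimes\kappa^\ell)(\alpha_m)^m=1\otimes(\kappa^\ell\a_m)$, and by the defining property of $\kappa$ in step~4 this equals $1\otimes\embedcyc{\ell}{m}(\a_\ell)$. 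With both hypotheses in hand, Corollary~\ref{alphaphi} immediately produces an embedding $\phi$ sending $\first{\alpha_{\ell}}{\zeta_\ell}$ to $\first{\hat{\alpha}}{(\zeta_m)^{\frac{m}{\ell}}}$, which is precisely the returned pair.

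The main obstacle is step~4 itself: one must justify that $\embedcyc{\ell}{m}(\a_\ell)/\a_m$ genuinely admits an $\ell$-th root $\kappa$ in the field of scalars $\F_{p}(\zeta_m)$, for otherwise the algorithm could not proceed. I would establish existence non-constructively from the fact that $\ell\mid m$ guarantees at least one embedding $\F_{p^\ell}\hookrightarrow\F_{p^m}$, hence at least one Kummer embedding $\Phi$ by Proposition~\ref{Phiphi}. By Proposition~\ref{Phialpha}, the element $\Phi(\alpha_\ell)$ is a solution of~\eqref{h90} for $(\zeta_m)^{\frac{m}{\ell}}$ whose $\ell$-th power is $1\otimes\embedcyc{\ell}{m}(\a_\ell)$; by one-dimensionality of the eigenspace it equals $(1\otimes\lambda)(\alpha_m)^{\frac{m}{\ell}}$ for some $\lambda\in\F_{p}(\zeta_m)^\times$. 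Comparing $\ell$-th powers gives $\lambda^\ell\a_m=\embedcyc{\ell}{m}(\a_\ell)$, so $\lambda$ is the sought root and $\kappa$ exists. Any two choices of $\kappa$ differ by an $\ell$-th root of unity, which lies in $\F_{p}(\zeta_m)$ since $\ell\mid m$; the distinct admissible $\kappa$ correspondingly yield the several embeddings $\F_{p^\ell}\hookrightarrow\F_{p^m}$, any one of which validates the output.
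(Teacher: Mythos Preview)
Your proposal is correct and follows essentially the same route as the paper: both arguments use Propositions~\ref{Phialpha} and~\ref{Phiphi} together with the one-dimensionality of the $(\zeta_m)^{m/\ell}$-eigenspace to show that $\embedcyc{\ell}{m}(\a_\ell)/\a_m$ is an $\ell$-th power in $\F_p(\zeta_m)$, and then conclude via Corollary~\ref{alphaphi}. The only difference is presentational---you first verify the hypotheses of Corollary~\ref{alphaphi} for the returned element and then justify step~4, whereas the paper first produces an abstract $\hat{\alpha}$, deduces existence of $\kappa$, and then identifies $(1\otimes\kappa)(\alpha_m)^{m/\ell}$ with (a twist of) that $\hat{\alpha}$.
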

\begin{proof}
By Propositions~\ref{Phialpha} and~\ref{Phiphi}, there exists $\hat{\alpha}\in A_m$
solution of~\eqref{h90} for $(\zeta_m)^{\frac{m}{\ell}}$ that satisfies $(\hat{\alpha})^\ell=1\otimes\embedcyc{\ell}{m}(\a_\ell)$.
On the other hand, $(\alpha_m)^{\frac{m}{\ell}}\in A_m$ is also a solution of~\eqref{h90} for $(\zeta_m)^{\frac{m}{\ell}}$,
thus $\hat{\alpha}=(1\otimes\lambda)(\alpha_m)^{\frac{m}{\ell}}$ for some $\lambda\in\F_{p}(\zeta_m)$.
It follows that $\embedcyc{\ell}{m}(\a_\ell)/\a_m=\lambda^\ell$ is a $\ell$-th power,
and $\kappa = (\zeta_{m})^{\frac{um}{\ell}}\lambda$ for some integer $u$.
Now we can replace $\hat{\alpha}$
with $(1\otimes(\zeta_{m})^{\frac{um}{\ell}})(\hat{\alpha})=(1\otimes\kappa)(\alpha_m)^{\frac{m}{\ell}}$
and conclude with Corollary~\ref{alphaphi}.
\end{proof}
From this proof and Proposition~\ref{depend}, it follows that another choice of the $l$-th root $\kappa$
only changes $\phi$ by a power of $\sigma$.
    
\section{Standard solutions of (H90)}
\label{sec:compatibleH90}

Plugging Algorithm~\ref{algo:allombert} into the Bosma--Cannon--Steel
framework immediately gives a way to compatibly embed arbitrary finite
fields. %
However, there are two points in Allombert's algorithm on which we
would like to improve:
\begin{description}
\item[\emph{Uniqueness:}] As mentioned, the element $\first{\alpha_{\ell}}{\zeta_\ell}$ is a generating element for $\F_{p^{\ell}}$,
or equivalently, it provides a defining irreducible polynomial of degree $\ell$.
However this polynomial depends on the choice of $\alpha_{\ell}$
(even though only through~$\a_\ell$, cf. Proposition~\ref{depend}).
\item[\emph{Compatibility:}] The embedding $\phi$ depends on a constant $\kappa$,
which itself depends on the choice of $\alpha_\ell,\alpha_m$ (and of a $\ell$-th root extraction).
Thus, given a certain number of finite fields, in order to ensure compatibility of the various
embeddings between them, one has to keep track of these constants $\kappa$
for all pairs $(\ell,m)$,
which grow \emph{quadratically} with the number of fields.
\end{description}

It would be useful if one could force $\kappa=1$, that is, if $\alpha_\ell,\alpha_m$
and $\Phi:A_\ell\hookrightarrow A_m$
could be chosen so that $\Phi(\alpha_\ell)=(\alpha_m)^{\frac{m}{\ell}}$.
From the description of Algorithm~\ref{algo:allombert},
this requires $\a_m=\embedcyc{\ell}{m}(\a_\ell)$.
Thus,
necessarily $\a_m$ lies in the subfield $\F_{p}((\zeta_m)^{\frac{m}{\ell}})$ of $\F_{p}(\zeta_m)$.
Possibly this condition could fail if $A_\ell$ and $A_m$ do not have the same field of scalars.
This motivates:
\begin{definition}
\label{complete}
A Kummer algebra is \emph{complete} if it is of the largest degree for a given level.
\end{definition}
Thus, the complete Kummer algebra of level $a$ is
\[ A_{p^a-1}=\F_{p^{p^a-1}}\otimes\F_{p^a} \]
with field of scalars $\F_{p^a}=\F_{p}(\zeta_{p^a-1})$ given by the corresponding $\zeta_{p^a-1}$ in our cyclotomic lattice $\system$,
e.g., defined by a (pseudo)-Conway polynomial of degree $a$.
\begin{lemma}
\label{Kummer_bizarre}
All nonzero solutions $\alpha_{p^a-1}\in A_{p^a-1}$ of \eqref{h90} for $\zeta_{p^a-1}$
have the same Kummer constant $\a_{p^a-1}=(\zeta_{p^a-1})^a$.
\end{lemma}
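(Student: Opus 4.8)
The plan is to exploit the defining feature of a complete Kummer algebra: its field of scalars is $\F_p(\zeta_{p^a-1})=\F_{p^a}$, whose multiplicative group has order exactly $\ell:=p^a-1$. This immediately settles the ``same constant'' assertion. As recalled just before Proposition~\ref{alphagen}, the solutions of~\eqref{h90} for $\zeta_{p^a-1}$ form a one-dimensional $1\otimes\F_{p^a}$-vector space, so any two nonzero solutions differ by a factor $1\otimes\xi$ with $\xi\in\F_{p^a}^\times$; raising to the $\ell$-th power multiplies the Kummer constant by $\xi^\ell$, and $\xi^\ell=1$ because $|\F_{p^a}^\times|=\ell$. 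Equivalently, Proposition~\ref{depend} already exhibits $\ell$ nonzero solutions sharing the constant $\a_{p^a-1}$, while there are only $\ell$ nonzero solutions in total, so all of them share it.

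The substantive part is to identify this common value as $(\zeta_{p^a-1})^a$; writing $\zeta=\zeta_{p^a-1}$, I would compute $\alpha^{p^a}$ for a nonzero solution $\alpha$ in two ways. On one hand, iterating Lemma~\ref{FrobFrob} gives $\alpha^{p^a}=(\sigma^a\otimes\sigma^a)(\alpha)$, and since $\sigma^a$ restricts to the identity on the scalar field $\F_{p^a}$ this equals $(\sigma\otimes1)^a(\alpha)$; iterating the relation $(\sigma\otimes1)(\alpha)=(1\otimes\zeta)\alpha$ coming from~\eqref{h90}, and using that $\sigma\otimes1$ fixes $1\otimes\F_{p^a}$ pointwise, yields $(\sigma\otimes1)^a(\alpha)=(1\otimes\zeta^a)\alpha$. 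On the other hand, since $p^a=\ell+1$ we simply have $\alpha^{p^a}=\alpha^{\ell}\cdot\alpha=(1\otimes\a_{p^a-1})\alpha$. Equating the two expressions and cancelling the invertible element $\alpha$ (invertibility following the remark after Proposition~\ref{alphagen}) gives $\a_{p^a-1}=\zeta^a$, as claimed.

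Note that the value $\zeta^a$ produced by this computation does not depend on the chosen $\alpha$, so the second part in fact re-proves the first and the two arguments could be merged. The only delicate point---and it is not a genuine obstacle---is to keep the two Frobenius actions distinct and combine them correctly: the ``free'' action $\sigma\otimes\sigma$ coming from characteristic $p$ via Lemma~\ref{FrobFrob}, versus the ``twisted'' action $\sigma\otimes1$ governed by~\eqref{h90}. Completeness enters precisely at the step $\sigma^a=\mathrm{id}$ on scalars: it is exactly because the level-$a$ algebra has degree $p^a-1$ that its scalar field is $\F_{p^a}$, so that the two Frobenius powers agree after $a$ iterations; the same computation would not close up for a non-complete algebra of level $a$.
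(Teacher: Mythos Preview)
Your argument is correct and is essentially the paper's own proof: compute $\alpha^{p^a}$ as $(\sigma^a\otimes\sigma^a)(\alpha)=(\sigma^a\otimes1)(\alpha)=(1\otimes\zeta^a)\alpha$ using Lemma~\ref{FrobFrob}, the triviality of $\sigma^a$ on the scalar field $\F_{p^a}$, and iteration of~\eqref{h90}, then cancel the invertible $\alpha$. Your opening paragraph on uniqueness and your closing commentary are extra (and, as you note, subsumed by the computation), but the core argument matches the paper exactly.
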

\begin{proof}
  From Lemma~\ref{FrobFrob} and the fact that $\sigma^a$ is trivial on
  $\F_{p^a}\simeq\F_{p}(\zeta_{p^a-1})$ we get that
\begin{equation*}
\begin{split}
(\alpha_{p^a-1})^{p^a}=(\sigma^a\otimes\sigma^a)(\alpha_{p^a-1})&=(\sigma^a\otimes1)(\alpha_{p^a-1})\\
&=(1\otimes\zeta_{p^a-1})^a\alpha_{p^a-1}.
\end{split}
\end{equation*}
We conclude since $\alpha_{p^a-1}$ is invertible.
\end{proof}
\begin{definition}
\label{alphastandard}
Let $l$ be an integer prime to $p$.
We define the \emph{standard} Kummer constant of order $\ell$ as
\[ \stda_{\ell}=(\embedcyc{l}{p^a-1})^{-1}((\zeta_{p^a-1})^a)\in\F_p(\zeta_\ell) \]
where $a=\nu(\ell)$ is the level of $A_\ell$.

We say a solution $\alpha_\ell\in A_\ell$ of~\eqref{h90} for $\zeta_\ell$ is
\emph{standard} if its Kummer constant is standard:
\[ (\alpha_\ell)^\ell=1\otimes\stda_{\ell}. \]
Then, by a \emph{decorated} Kummer algebra we mean a pair
\[ (A_\ell,\alpha_{\ell}) \]
with such $\alpha_\ell$ standard.
\end{definition}
Observe that $\embedcyc{l}{p^a-1}$ is an isomorphism when $a=\nu(\ell)$,
so $\stda_\ell$ is well defined.

For complete algebras, Lemma~\ref{Kummer_bizarre} asserts that all nonzero $\alpha_{p^a-1}$ are standard.
\begin{proposition}
\label{standardexiste}
Let $l$ be an integer not divisible by $p$.
Then $A_\ell$ can be decorated, i.e., it admits a standard $\alpha_\ell$.
Moreover, this $\alpha_\ell$ is unique up to a $\ell$-th root of unity.
\end{proposition}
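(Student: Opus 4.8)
The plan is to dispatch uniqueness immediately and then settle existence by lifting into the complete Kummer algebra $A_{p^a-1}$ with $a=\nu(\ell)$. For uniqueness I would invoke Proposition~\ref{depend} (or re-derive it on the spot): since the solutions of~\eqref{h90} for $\zeta_\ell$ form a $1$-dimensional $1\otimes\F_p(\zeta_\ell)$-vector space, any two nonzero solutions differ by a scalar $1\otimes\eta$ with $\eta\in\F_p(\zeta_\ell)^\times$, which multiplies the Kummer constant by $\eta^\ell$. Requiring both constants to equal $\stda_\ell$ forces $\eta^\ell=1$, so a standard $\alpha_\ell$ is determined up to an $\ell$-th root of unity, and conversely any such multiple stays standard.

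For existence I would pass to the complete Kummer algebra $A_{p^a-1}=\F_{p^{p^a-1}}\otimes\F_{p^a}$ of the same level $a=\nu(\ell)$. Its field of scalars equals $\F_p(\zeta_\ell)$, so $\embedcyc{\ell}{p^a-1}$ is an isomorphism (as noted after Definition~\ref{alphastandard}), and by Lemma~\ref{Kummer_bizarre} every nonzero solution $\alpha_{p^a-1}$ of~\eqref{h90} for $\zeta_{p^a-1}$ has Kummer constant $(\zeta_{p^a-1})^a=\embedcyc{\ell}{p^a-1}(\stda_\ell)$. I would fix such an $\alpha_{p^a-1}$ (one exists, the eigenspace being $1$-dimensional and nonzero) and set $\hat\alpha=(\alpha_{p^a-1})^{(p^a-1)/\ell}$. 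Raising an eigenvector of $\sigma\otimes1$ for the eigenvalue $1\otimes\zeta_{p^a-1}$ to the power $(p^a-1)/\ell$ yields an eigenvector for $1\otimes(\zeta_{p^a-1})^{(p^a-1)/\ell}=1\otimes\embedcyc{\ell}{p^a-1}(\zeta_\ell)$, so $\hat\alpha$ is a nonzero solution of~\eqref{h90} for $\embedcyc{\ell}{p^a-1}(\zeta_\ell)$, with $(\hat\alpha)^\ell=(\alpha_{p^a-1})^{p^a-1}=1\otimes(\zeta_{p^a-1})^a$.

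It then remains to transport $\hat\alpha$ down to $A_\ell$. Since $\nu(\ell)=a$ gives $\ell\mid p^a-1$, there is a field embedding $\F_{p^\ell}\hookrightarrow\F_{p^{p^a-1}}$, hence by Proposition~\ref{Phiphi} a Kummer embedding $\Phi:A_\ell\hookrightarrow A_{p^a-1}$. For any nonzero solution $\beta_\ell$ of~\eqref{h90} for $\zeta_\ell$ in $A_\ell$, the image $\Phi(\beta_\ell)$ is again a nonzero solution for $\embedcyc{\ell}{p^a-1}(\zeta_\ell)$ (as $\Phi$ commutes with $\sigma\otimes1$ and extends $\embedcyc{\ell}{p^a-1}$ on scalars), so by one-dimensionality $\Phi(\beta_\ell)=(1\otimes\xi)\hat\alpha$ for some $\xi\in\F_p(\zeta_{p^a-1})^\times$. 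Writing $\xi=\embedcyc{\ell}{p^a-1}(\eta)$ --- possible because $\embedcyc{\ell}{p^a-1}$ is surjective --- the element $\alpha_\ell:=(1\otimes\eta^{-1})\beta_\ell$ satisfies $\Phi(\alpha_\ell)=\hat\alpha$; applying $\Phi^{-1}$ to $(\hat\alpha)^\ell=1\otimes\embedcyc{\ell}{p^a-1}(\stda_\ell)$ and using scalar-compatibility and injectivity of $\Phi$ gives $(\alpha_\ell)^\ell=1\otimes\stda_\ell$, i.e.\ $\alpha_\ell$ is standard.

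The crux --- and the only real obstacle --- is this existence step: a generic $\beta_\ell\in A_\ell$ has no reason to be standard, and the question is really whether $\stda_\ell$ lies in the correct $\ell$-th-power coset of attainable Kummer constants. Lifting to the complete algebra is what forces the answer: there Lemma~\ref{Kummer_bizarre} pins the constant down unconditionally, and it is precisely the fact that $\embedcyc{\ell}{p^a-1}$ is an isomorphism, not merely an embedding, that lets me pull the relating scalar $\xi$ back to $\F_p(\zeta_\ell)$ and conclude. Equivalently, one could argue through Proposition~\ref{Phialpha}: the mere existence of a Kummer embedding $A_\ell\hookrightarrow A_{p^a-1}$ already forces the Kummer constant of $\beta_\ell$ to differ from $\stda_\ell$ by an $\ell$-th power in $\F_p(\zeta_\ell)^\times$, which is exactly what is needed to rescale $\beta_\ell$ into a standard solution.
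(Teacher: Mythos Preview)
Your proof is correct and follows essentially the same route as the paper: lift to the complete Kummer algebra $A_{p^a-1}$ via a Kummer embedding $\Phi$ (Proposition~\ref{Phiphi}), compare an arbitrary solution $\beta_\ell$ with the power $(\alpha_{p^a-1})^{(p^a-1)/\ell}$ using one-dimensionality of the eigenspace, and pull the relating scalar back through the isomorphism $\embedcyc{\ell}{p^a-1}$ to rescale $\beta_\ell$ into a standard solution. The only cosmetic difference is that you treat uniqueness first and separately, whereas the paper folds it into the final sentence by parametrizing all standard solutions at once.
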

\begin{proof}
Let $\alpha'_\ell$ be any nonzero solution of~\eqref{h90} for $\zeta_\ell$.
Set $a=\nu(\ell)$, pick any $\alpha_{p^a-1}\in A_{p^a-1}$ standard (Lemma~\ref{Kummer_bizarre}),
and pick any Kummer embedding $\Phi:A_\ell\hookrightarrow A_{p^a-1}$ (Proposition~\ref{Phiphi}).
Then $\Phi(\alpha'_\ell)$ and $(\alpha_{p^a-1})^{\frac{p^a-1}{\ell}}$ are two nonzero solutions of~\eqref{h90}
for $\embedcyc{l}{p^a-1}(\zeta_\ell)=(\zeta_{p^a-1})^{\frac{p^a-1}{\ell}}$ in $A_{p^a-1}$, thus there is a scalar $\lambda\in\F_{p}(\zeta_{p^a-1})$
such that
\begin{equation*}
(\alpha_{p^a-1})^{\frac{p^a-1}{\ell}}=(1\otimes\lambda)\Phi(\alpha'_\ell)=\Phi((1\otimes\tilde\lambda)\alpha'_\ell),
\end{equation*}
where $\tilde\lambda=(\embedcyc{l}{p^a-1})^{-1}(\lambda)\in\F_{p}(\zeta_{\ell})$.

Setting $\alpha_\ell=(1\otimes\eta\tilde\lambda)\alpha'_\ell\in A_\ell$ for $\eta\in\F_{p}(\zeta_\ell)$,
we get $\a_{\ell}=\eta^\ell(\embedcyc{l}{p^a-1})^{-1}(\stda_{p^a-1})=\eta^\ell\stda_\ell$, and
thus the standard $\alpha_\ell\in A_\ell$ are the $(1\otimes\zeta_\ell^u\tilde\lambda)\alpha'_\ell$, for $0\leq u<\ell$.
\end{proof}

\begin{definition}
\label{sstandard}
A generating element $s\in\F_{p^\ell}$ is called \emph{standard}
if it is of the form $s=\first{\alpha_{\ell}}{\zeta_\ell}$ for $\alpha_{\ell}\in A_\ell$
a standard solution of~\eqref{h90}.

The \emph{standard defining polynomial} $P_\ell$ for $\F_{p^\ell}$ is then the minimal polynomial over $\F_p$
of such a standard $s$.
\end{definition}

By Proposition~\ref{depend}, we note that $P_\ell$ is entirely determined by $\stda_{\ell}$,
and thus, by the given cyclotomic lattice $\system$, possibly up to order $p^{\nu(\ell)}-1$.
As an example, we give in Table~\ref{tab:std-polys} the first ten
standard polynomials induced by the system of Conway polynomials
for $p=2$ (thus, in this example, $P_\ell$ only depends on the Conway polynomial of degree $\nu(\ell)$).

We remark that it is easy to extend the definitions of decorated
algebras and standard elements to any extension degree, similarly to
the way this is done for the basic Lenstra--Allombert algorithm. %
Use any (standard) construction for Artin-Schreier towers over finite
fields (e.g.,~\cite{df+schost12}), define decorated algebras by
tensoring together Kummer algebras and Artin-Schreier extensions of
$\F_p$, and define standard elements as, e.g., the product of a
solution of multiplicative H90 and one of additive H90. %
While this solution is simple and effective, it is rather orthogonal
to our work, hence we omit the details here.

\begin{table}[b]
  \centering
  \small
  \begin{tabularx}{\columnwidth}{>{\raggedleft\arraybackslash}X | >{\raggedleft\arraybackslash}X}
    $x+1$ & $x^9+x^7+x^4+x^2+1$\\
    $x^3+x+1$ & $x^{11}+x^8+x^7+x^6+x^2+x+1$\\
    $x^5+x^3+1$ & $x^{13}+x^{10}+x^5+x^3+1$\\
    $x^7+x+1$ & $x^{15}+x+1$ \\[0.3em]
    \multicolumn{2}{r}{$x^{17}+x^{11}+x^{10}+x^8+ x^7+x^6+x^4+x^3+x^2+x+1$} \\
    \multicolumn{2}{r}{$x^{19}+x^{17}+x^{16}+x^{15}+x^{14}+x^{13}+x^{12}+x^8+x^7+x^6+x^5+x^3+1$}
  \end{tabularx}
  
  \caption{The first ten standard polynomials derived from Conway
    polynomials for $p=2$.}
  \label{tab:std-polys}
\end{table}

The decoration of an algebra $A_\ell$, and the associated standard
generating element and polynomial for $\F_{p^\ell}$, can be computed by the simple adaptation of
Allombert's algorithm presented below. %

\begin{algorithm}
  \caption{(Decoration -- Standardization)}
  \label{algo:decoration}
  \begin{algorithmic}[1]
    \REQUIRE {$\F_{p^\ell}$, for $\ell$ prime to $p$, and $\system$ a cyclotomic lattice.}
    \ENSURE {$(A_\ell,\alpha_\ell)$ decorated, $P_\ell$ standard irreducible polynomial of degree $\ell$,
      and $s\in\F_{p^\ell}$ standard generating element inducing $\F_{p^\ell}\simeq\F_{p}[T]/(P_\ell)$.}
  \STATE Prepare the Kummer algebra $A_\ell$.
  \STATE Prepare $\stda_\ell=(\embedcyc{l}{p^a-1})^{-1}((\zeta_{p^a-1})^a)\in\F_p(\zeta_\ell)$.
  \STATE Find $\alpha'_\ell\in A_\ell$ nonzero solution of~\eqref{h90} for $\zeta_\ell$.
  \STATE Compute its Kummer constant: $(\alpha'_\ell)^\ell=1\otimes\a'_\ell$.
  \STATE Compute $\kappa$ a $\ell$-th root of $\stda_\ell/\a'_\ell$.
  \STATE Set $\alpha_{\ell}=(1\otimes\kappa)\alpha'_\ell$.
  \STATE Compute $P_\ell$ the minimal polynomial of $\first{\alpha_{\ell}}{\zeta_\ell}$ over $\F_{p}$.
  \STATE Return $(A_\ell,\alpha_\ell)$, $P_\ell$, and $\first{\alpha_\ell}{\zeta_\ell}$.
  \end{algorithmic}
\end{algorithm}

Algorithm~\ref{algo:decoration} is correct, indeed Proposition~\ref{standardexiste} ensures that
a standard $\alpha_{\ell}$ exists, and thus $\alpha'_\ell=(1\otimes\kappa^{-1})\alpha_{\ell}$
for some $\kappa\in\F_{p}(\zeta_\ell)$, so $\stda_\ell/\a'_\ell=\kappa^\ell$ is a $\ell$-th power.

By design, decorated Kummer algebras of the same level admit standard Kummer embeddings,
under which the corresponding standard solutions of~\eqref{h90} are power-compatible:
\begin{proposition}
\label{embedincomplete}
Let $\ell\,|\,m$ be integers prime to $p$ and such that $\nu(\ell)=\nu(m)=a$.
Let $(A_\ell,\alpha_\ell)$, $(A_m,\alpha_m)$ be decorated Kummer algebras
of degree $\ell,m$ respectively (and of the same level $a$).
Then, there is a unique Kummer embedding
\[ \embedalg{\ell}{m}:A_\ell\hookrightarrow A_m \]
such that $\embedalg{\ell}{m}(\alpha_\ell)=(\alpha_m)^{\frac{m}{\ell}}$.
\end{proposition}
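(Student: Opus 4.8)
The plan is to reduce everything to Proposition~\ref{Phialpha} by exhibiting the explicit candidate $\hat{\alpha}=(\alpha_m)^{\frac{m}{\ell}}$ and verifying that it satisfies the two conditions of that correspondence; existence and uniqueness of $\embedalg{\ell}{m}$ then follow immediately. Concretely, Proposition~\ref{Phialpha} says that Kummer embeddings $A_\ell\hookrightarrow A_m$ sending $\alpha_\ell$ to a given $\hat{\alpha}$ are in bijection with those $\hat{\alpha}\in A_m$ that solve \eqref{h90} for $(\zeta_m)^{\frac{m}{\ell}}$ and satisfy $(\hat{\alpha})^\ell=1\otimes\embedcyc{\ell}{m}(\a_\ell)$. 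So the whole argument amounts to checking these two properties for $\hat{\alpha}=(\alpha_m)^{\frac{m}{\ell}}$.

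First I would verify the (H90) condition. Since $(A_m,\alpha_m)$ is decorated, $\alpha_m$ is a nonzero solution of \eqref{h90} for $\zeta_m$; by the remark following \eqref{h90} that powers of a solution for $\eta$ are solutions for the corresponding powers of $\eta$, the element $(\alpha_m)^{\frac{m}{\ell}}$ is a solution of \eqref{h90} for $(\zeta_m)^{\frac{m}{\ell}}$, as required. The second, and crucial, step is the Kummer-constant condition. Here $\bigl((\alpha_m)^{\frac{m}{\ell}}\bigr)^\ell=(\alpha_m)^m=1\otimes\a_m=1\otimes\stda_m$, where the last equality uses that $\alpha_m$ is standard. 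Thus the condition $(\hat{\alpha})^\ell=1\otimes\embedcyc{\ell}{m}(\stda_\ell)$ reduces exactly to the scalar identity $\stda_m=\embedcyc{\ell}{m}(\stda_\ell)$.

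To establish this identity I would use the compatibility of the cyclotomic lattice $\system$, namely $\embedcyc{\ell}{p^a-1}=\embedcyc{m}{p^a-1}\circ\embedcyc{\ell}{m}$ (valid because $\ell\mid m\mid p^a-1$, and directly checked on $\zeta_\ell\mapsto(\zeta_{p^a-1})^{\frac{p^a-1}{\ell}}$), together with the hypothesis $\nu(\ell)=\nu(m)=a$, which forces all three maps among $\F_p(\zeta_\ell)$, $\F_p(\zeta_m)$ and $\F_p(\zeta_{p^a-1})=\F_{p^a}$ to be isomorphisms. Then, unwinding $\stda_\ell=(\embedcyc{\ell}{p^a-1})^{-1}\bigl((\zeta_{p^a-1})^a\bigr)$ and applying $\embedcyc{\ell}{m}$, the factorization $(\embedcyc{\ell}{p^a-1})^{-1}=(\embedcyc{\ell}{m})^{-1}\circ(\embedcyc{m}{p^a-1})^{-1}$ makes the $\embedcyc{\ell}{m}$ cancel, leaving $(\embedcyc{m}{p^a-1})^{-1}\bigl((\zeta_{p^a-1})^a\bigr)=\stda_m$. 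With both conditions verified, $\hat{\alpha}=(\alpha_m)^{\frac{m}{\ell}}$ is a legitimate element of the correspondence, yielding the unique Kummer embedding $\embedalg{\ell}{m}$ with $\embedalg{\ell}{m}(\alpha_\ell)=(\alpha_m)^{\frac{m}{\ell}}$. The main obstacle is precisely this scalar identity: it is the single point where the equal-level hypothesis $\nu(\ell)=\nu(m)$ is indispensable, since otherwise $\stda_\ell$ and $\stda_m$ would live in genuinely different fields of scalars and need not correspond under $\embedcyc{\ell}{m}$.
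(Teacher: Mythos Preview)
Your proof is correct and follows exactly the same approach as the paper: the paper's proof is the one-liner ``Proposition~\ref{Phialpha} with $\hat{\alpha}=(\alpha_m)^{\frac{m}{\ell}}$,'' and you have simply spelled out the verifications that $\hat{\alpha}$ satisfies the two hypotheses of that proposition, including the scalar identity $\stda_m=\embedcyc{\ell}{m}(\stda_\ell)$ forced by $\nu(\ell)=\nu(m)=a$.
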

\begin{proof}
Proposition~\ref{Phialpha} with $\hat{\alpha}=(\alpha_m)^{\frac{m}{\ell}}$.
\end{proof}
Most often we will apply Proposition~\ref{embedincomplete} with $m=p^a-1$.

On the other hand, 
since power-compatibility implies $\a_m=\embedcyc{\ell}{m}(\a_\ell)$,
it cannot be satisfied for a Kummer embedding
between decorated Kummer algebras of different levels.
However, at least between \emph{complete} decorated algebras, we can request some \emph{norm}-compatibility
instead.

Let $A_m$ be a Kummer algebra of level $b=\nu(m)$, so
\[ A_m=\F_{p^m}\otimes\F_{p}(\zeta_m)\overset{\sim}{\longrightarrow}\F_{p^m}\otimes\F_{p}(\zeta_{p^b-1}), \]
where the isomorphism is given by $1\otimes\embedcyc{m}{p^b-1}$.
Then, for an integer $a\,|\,b$,
the subalgebra of $A_m$ invariant under $1\otimes\sigma^a$ is identified
by this isomorphism with
\[ (A_m)^{1\otimes\sigma^a}\simeq\F_{p^m}\otimes\F_{p}((\zeta_{p^b-1})^{\frac{p^b-1}{p^a-1}}), \]
where
$(\zeta_{p^b-1})^{\frac{p^b-1}{p^a-1}}=\operatorname{N}_{\F_{p^b}/\F_{p^a}}(\zeta_{p^b-1})=\embedcyc{p^a-1}{p^b-1}(\zeta_{p^a-1})$,
and with $\operatorname{N}_{\F_{p^b}/\F_{p^a}}$ the norm of the field extension
$\F_{p^b}/\F_{p^a}$.

\begin{definition}
\label{def:norm}
Given a Kummer algebra $A_n$, and some integers $a\,|\,b\,|\,\nu(n)$, we define
the \emph{scalar norm} operator
\[
\begin{array}{cccc}
  \norm_{b/a,A_n}: & (A_n)^{1\otimes\sigma^b} & \to & (A_n)^{1\otimes\sigma^a} \\
  & \gamma & \mapsto & \prod_{0\leq j<\frac{b}{a}} (1 \otimes \sigma^{ja})(\gamma).
\end{array}
\]
\end{definition}
This is well-defined, i.e., the image of $\norm_{b/a,A_n}$ is invariant under $1\otimes\sigma^a$ as specified.
Often the ambient algebra $A_n$ will be implicit,
and we will write $\norm_{b/a}$ instead of $\norm_{b/a,A_n}$.

By construction, $\norm_{b/a}$ acts on $1\otimes\F_{p^b}^\times$ as $1\otimes\operatorname{N}_{\F_{p^b}/\F_{p^a}}$.
Scalar norms are multiplicative:
\[
  \norm_{b/a}(\gamma\gamma')=\norm_{b/a}(\gamma)\norm_{b/a}(\gamma'),
\]
transitive: \[ \norm_{c/a} = \norm_{b/a}\circ\norm_{c/b},\] and they
commute with $\sigma\otimes1$.
\begin{proposition}
\label{embedcomplete}
Let $a\,|\,b$ be integers,
and let $(A_{p^a-1},\alpha_{p^a-1})$, $(A_{p^b-1},\alpha_{p^b-1})$ be decorated complete Kummer algebras
of level $a,b$ respectively.
Then there is a unique Kummer embedding
\[ \embedalg{p^a-1}{p^b-1}:A_{p^a-1}\hookrightarrow A_{p^b-1} \]
such that $\embedalg{p^a-1}{p^b-1}(\alpha_{p^a-1})=\norm_{b/a}(\alpha_{p^b-1})$.
\end{proposition}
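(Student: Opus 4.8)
The plan is to invoke Proposition~\ref{Phialpha} with $\ell=p^a-1$ and $m=p^b-1$. Since $(A_{p^a-1},\alpha_{p^a-1})$ is decorated, its Kummer constant is $\a_{p^a-1}=\stda_{p^a-1}=(\zeta_{p^a-1})^a$, so Proposition~\ref{Phialpha} identifies Kummer embeddings $\Phi$ (via $\hat{\alpha}=\Phi(\alpha_{p^a-1})$) with the solutions $\hat{\alpha}\in A_{p^b-1}$ of~\eqref{h90} for $(\zeta_{p^b-1})^{\frac{p^b-1}{p^a-1}}$ that satisfy $(\hat{\alpha})^{p^a-1}=1\otimes\embedcyc{p^a-1}{p^b-1}((\zeta_{p^a-1})^a)$. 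It therefore suffices to check that the candidate $\hat{\alpha}=\norm_{b/a}(\alpha_{p^b-1})$ meets these two requirements; the uniqueness claim is then automatic, since Proposition~\ref{Phialpha} is a bijection. Note that $\norm_{b/a}$ is defined on all of $A_{p^b-1}$, as this algebra has level $b$ and hence $1\otimes\sigma^b$ is the identity. Throughout I write $\zeta=\zeta_{p^b-1}$, $n=b/a$, and $N=\frac{p^b-1}{p^a-1}=\sum_{j=0}^{n-1}p^{ja}$.

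The first requirement is quickly dispatched. Because $\norm_{b/a}$ commutes with $\sigma\otimes1$, applying $\sigma\otimes1$ to $\hat{\alpha}=\prod_{j=0}^{n-1}(1\otimes\sigma^{ja})(\alpha_{p^b-1})$ and using $(\sigma\otimes1)(\alpha_{p^b-1})=(1\otimes\zeta)\alpha_{p^b-1}$ together with $\sigma^{ja}(\zeta)=\zeta^{p^{ja}}$ produces the scalar factor $\prod_{j}\zeta^{p^{ja}}=\zeta^{N}$. Thus $(\sigma\otimes1)(\hat{\alpha})=(1\otimes\zeta^N)\hat{\alpha}$, which is exactly~\eqref{h90} for $(\zeta_{p^b-1})^{\frac{p^b-1}{p^a-1}}=\zeta^N$.

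The real work is the second requirement, and this is where I expect the exponent bookkeeping to be the main obstacle. Starting from Lemma~\ref{FrobFrob}, which gives $(\sigma\otimes\sigma)(\alpha_{p^b-1})=(\alpha_{p^b-1})^p$, and combining it with the H90 relation, I would first establish $(1\otimes\sigma)(\alpha_{p^b-1})=(1\otimes\zeta^{-p})(\alpha_{p^b-1})^p$, and then by induction $(1\otimes\sigma^k)(\alpha_{p^b-1})=(1\otimes\zeta^{-kp^k})(\alpha_{p^b-1})^{p^k}$. Taking the product over $k=ja$ for $0\le j<n$ collapses $\hat{\alpha}$ to a single power, $\hat{\alpha}=(1\otimes\zeta^{-S})(\alpha_{p^b-1})^{N}$ with $S=a\sum_{j=0}^{n-1}j\,p^{ja}$. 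Raising to the $(p^a-1)$-th power and using that $\alpha_{p^b-1}$ has Kummer constant $\zeta^b$ (Lemma~\ref{Kummer_bizarre}), so that $(\alpha_{p^b-1})^{N(p^a-1)}=(\alpha_{p^b-1})^{p^b-1}=1\otimes\zeta^b$, yields $(\hat{\alpha})^{p^a-1}=1\otimes\zeta^{\,b-S(p^a-1)}$.

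It then remains only to verify the exponent congruence $b-S(p^a-1)\equiv Na\pmod{p^b-1}$, which matches the target $1\otimes\embedcyc{p^a-1}{p^b-1}((\zeta_{p^a-1})^a)=1\otimes\zeta^{Na}$. A short telescoping computation evaluates $S(p^a-1)=a(n-1)p^b-a(N-1)$, where I use the identity $\frac{p^b-p^a}{p^a-1}=N-1$. Since $na=b$, one has $a(n-1)=b-a$, and the congruence collapses to $(b-a)(p^b-1)\equiv0\pmod{p^b-1}$, which holds trivially. This establishes the second requirement and completes the proof.
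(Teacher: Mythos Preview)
Your proof is correct. Both conditions needed to invoke Proposition~\ref{Phialpha} are verified, and your exponent computation $b-S(p^a-1)\equiv Na\pmod{p^b-1}$ checks out.

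However, the route differs noticeably from the paper's. The paper avoids all the exponent bookkeeping you flagged as ``the main obstacle''. It exploits directly that $\hat\alpha=\norm_{b/a}(\alpha_{p^b-1})$ lies in $(A_{p^b-1})^{1\otimes\sigma^a}$, so by Lemma~\ref{FrobFrob} one has $(\hat\alpha)^{p^a}=(\sigma^a\otimes\sigma^a)(\hat\alpha)=(\sigma^a\otimes1)(\hat\alpha)$. Commuting $\sigma^a\otimes1$ through the norm, applying~\eqref{h90} $a$ times, and using that $\norm_{b/a}$ acts on $1\otimes\F_{p^b}^\times$ as the field norm $\operatorname{N}_{\F_{p^b}/\F_{p^a}}$ (which sends $\zeta_{p^b-1}$ to $\embedcyc{p^a-1}{p^b-1}(\zeta_{p^a-1})$) gives $(\hat\alpha)^{p^a}=(1\otimes\embedcyc{p^a-1}{p^b-1}(\zeta_{p^a-1})^a)\hat\alpha$ in one line, hence the Kummer-constant condition upon dividing by $\hat\alpha$. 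No induction, no sum $S$, no congruence.

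What your approach buys is the explicit identity $\hat\alpha=(1\otimes\zeta^{-S})(\alpha_{p^b-1})^{N}$ relating the scalar norm to the plain power. This is exactly the content of the paper's subsequent Proposition~\ref{key}, so in effect you have pre-empted that result and used it here; the paper instead keeps the present proof structural and defers the explicit power/norm comparison until it is actually needed for Corollary~\ref{explicit_general_standard_embedding}. A minor further point: you verify the~\eqref{h90} condition for $\zeta^{N}$ explicitly, which the paper leaves implicit.
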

\begin{proof}
By Lemma~\ref{FrobFrob} and the properties of the norm,
\begin{equation*}
\begin{split}
(\norm_{b/a}(\alpha_{p^b-1}))^{p^a}&=(\sigma^a\otimes\sigma^a)(\norm_{b/a}(\alpha_{p^b-1}))\\
&=(\sigma^a\otimes1)(\norm_{b/a}(\alpha_{p^b-1}))\\
&=\norm_{b/a}((\sigma^a\otimes1)(\alpha_{p^b-1}))\\
&=\norm_{b/a}((1\otimes(\zeta_{p^b-1})^a)\alpha_{p^b-1})\\
&=(1\otimes\embedcyc{p^a-1}{p^b-1}(\zeta_{p^a-1})^a)\norm_{b/a}(\alpha_{p^b-1}).
\end{split}
\end{equation*}
So $\hat\alpha=\norm_{b/a}(\alpha_{p^b-1})$
satisfies $(\hat{\alpha})^{p^a-1}=1\otimes\embedcyc{p^a-1}{p^b-1}(\a_{p^a-1})$
and we conclude with Proposition~\ref{Phialpha}.
\end{proof}

\section{Standard embeddings}
\label{sec:construction}

In Proposition~\ref{embedincomplete}, we saw how to construct a standard
power-compatible embedding of a decorated Kummer algebra into
its decorated complete algebra,
and in Proposition~\ref{embedcomplete}, a standard
norm-compatible embedding between decorated complete algebras
of dividing levels.
 
Now, consider general $\ell\,|\,m$ not divisible by $p$,
set $a=\nu(\ell)$, $b=\nu(m)$, and consider the diagram
\begin{equation*}
\label{3cotes}
\begin{CD}
(A_{p^a-1},\alpha_{p^a-1}) @>{\embedalg{p^a-1}{p^b-1}}>> (A_{p^b-1},\alpha_{p^b-1}) \\
@A{\embedalg{\ell}{p^a-1}}AA @AA{\embedalg{m}{p^b-1}}A\\
(A_\ell,\alpha_\ell) @. (A_m,\alpha_m)
\end{CD}
\end{equation*}
of standard embeddings of decorated algebras.
\begin{lemma}
In this setting, there exists a unique Kummer embedding
\[ \embedalg{\ell}{m}:A_\ell\hookrightarrow A_m \]
that makes the diagram commute.
\end{lemma}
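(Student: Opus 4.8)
The plan is to reduce everything to the correspondence between Kummer embeddings and ordinary field embeddings supplied by Proposition~\ref{Phiphi}, and then to exploit the fact that a finite field has a unique subfield of each degree. First I record the two divisibility facts that make the diagram meaningful. Since $\ell\,|\,m$, reduction modulo $\ell$ shows $\nu(\ell)\,|\,\nu(m)$, i.e.\ $a\,|\,b$, so that $\embedalg{p^a-1}{p^b-1}$ is indeed defined by Proposition~\ref{embedcomplete}; and since $b=\nu(m)$ we have $m\,|\,p^b-1$, whence $\ell\,|\,m\,|\,p^b-1$, so $\F_{p^{p^b-1}}$ contains subfields of degrees $\ell$ and $m$.

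Next I would form the composite along the left-then-top edge, $\Psi=\embedalg{p^a-1}{p^b-1}\circ\embedalg{\ell}{p^a-1}\colon A_\ell\hookrightarrow A_{p^b-1}$. It is again a Kummer embedding: it is injective, it commutes with $\sigma\otimes1$, and on scalars it acts as $\embedcyc{p^a-1}{p^b-1}\circ\embedcyc{\ell}{p^a-1}=\embedcyc{\ell}{p^b-1}$ by transitivity of the cyclotomic lattice. By Proposition~\ref{Phiphi} it therefore has the form $\Psi=\psi\otimes\embedcyc{\ell}{p^b-1}$ for a unique field embedding $\psi\colon\F_{p^\ell}\hookrightarrow\F_{p^{p^b-1}}$; likewise the right edge is $\embedalg{m}{p^b-1}=\phi_m\otimes\embedcyc{m}{p^b-1}$ for some $\phi_m\colon\F_{p^m}\hookrightarrow\F_{p^{p^b-1}}$.

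Now comes the key step. The image of $\psi$ is the unique subfield of $\F_{p^{p^b-1}}$ of degree $\ell$, and the image of $\phi_m$ is the unique subfield of degree $m$; since $\ell\,|\,m$, the former is contained in the latter. Hence $\psi$ factors uniquely as $\psi=\phi_m\circ\phi$ for a field embedding $\phi\colon\F_{p^\ell}\hookrightarrow\F_{p^m}$. Setting $\embedalg{\ell}{m}:=\phi\otimes\embedcyc{\ell}{m}$, which is a Kummer embedding by Proposition~\ref{Phiphi}, and using that the correspondence of Proposition~\ref{Phiphi} commutes with composition, I obtain
\[ \embedalg{m}{p^b-1}\circ\embedalg{\ell}{m}=(\phi_m\circ\phi)\otimes(\embedcyc{m}{p^b-1}\circ\embedcyc{\ell}{m})=\psi\otimes\embedcyc{\ell}{p^b-1}=\Psi, \]
which is exactly the commutativity of the square. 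For uniqueness, any Kummer embedding $\embedalg{\ell}{m}'$ closing the diagram corresponds to a field embedding $\phi'$ with $\phi_m\circ\phi'=\psi$; since $\phi_m$ is injective this forces $\phi'=\phi$, hence $\embedalg{\ell}{m}'=\embedalg{\ell}{m}$.

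The only real obstacle is this key step: turning the commutativity constraint into a factorization of field embeddings and checking that the image of $\psi$ genuinely lands inside the image of $\phi_m$. This is where the divisibility $\ell\,|\,m$ is used essentially (not merely $a\,|\,b$), through the uniqueness of subfields of a finite field; everything else is formal bookkeeping with Proposition~\ref{Phiphi} and the transitivity of the scalar embeddings $\embedcyc{}{}$. If a more computational route is preferred, one can instead define $\hat\alpha\in A_m$ as the preimage under $\embedalg{m}{p^b-1}$ of $\Psi(\alpha_\ell)$ and invoke Proposition~\ref{Phialpha}; the nontrivial point there is precisely that this preimage exists, i.e.\ that $\Psi(\alpha_\ell)$ is fixed by $\sigma^m\otimes1$, which again follows from $\ell\,|\,m$ since the relevant scalar $(\zeta_{p^b-1})^{(m/\ell)(p^b-1)}$ is trivial.
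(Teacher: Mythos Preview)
Your proof is correct and takes a genuinely different route from the paper's. The paper argues at the level of elements: it pushes $\alpha_\ell$ around the diagram to $\hat{\hat\alpha}=\embedalg{p^a-1}{p^b-1}(\embedalg{\ell}{p^a-1}(\alpha_\ell))\in A_{p^b-1}$, observes that $\hat{\hat\alpha}$ is invariant under $\sigma^\ell\otimes1$ and $1\otimes\sigma^a$ (hence under $\sigma^m\otimes1$ and $1\otimes\sigma^b$), so it lies in the image of $\embedalg{m}{p^b-1}$; pulling back gives $\hat\alpha\in A_m$, and Proposition~\ref{Phialpha} then furnishes the desired Kummer embedding. You instead work at the level of morphisms, translating the whole square via Proposition~\ref{Phiphi} into a factorization problem for ordinary field embeddings, which you solve by the uniqueness of subfields of a finite field. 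Your argument is tidier and makes uniqueness immediate, while the paper's is more constructive: it directly isolates the element $\hat\alpha=\embedalg{\ell}{m}(\alpha_\ell)$, which is precisely the object the authors need next when they seek the explicit constant $\kappa_{\ell,m}$ with $\hat\alpha=(1\otimes\kappa_{\ell,m})(\alpha_m)^{m/\ell}$. You in fact sketch this element-based alternative in your final paragraph; that sketch is essentially the paper's proof.
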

\begin{proof}
Consider $\hat{\hat\alpha}=\embedalg{p^a-1}{p^b-1}(\embedalg{\ell}{p^a-1}(\alpha_\ell))\in A_{p^b-1}$.
Then $\hat{\hat\alpha}$ is invariant under $\sigma^\ell\otimes1$ and
under $1\otimes\sigma^a$ (because $\alpha_\ell$ is),
thus, \textit{a fortiori}, invariant under $\sigma^m\otimes1$ and
under $1\otimes\sigma^b$, which means it lies in the image
of $\embedalg{m}{p^b-1}$.
We can then set $\hat\alpha=(\embedalg{m}{p^b-1})^{-1}(\hat{\hat\alpha})$.
If $\embedalg{\ell}{m}$ exists, then it necessarily maps $\alpha_\ell$
to $\hat\alpha$.
However, chasing in the diagram, it is easily seen that $\hat\alpha$ is
a solution of~\eqref{h90} for $(\zeta_m)^{\frac{m}{\ell}}$ that
satisfies $(\hat{\alpha})^\ell=1\otimes\embedcyc{\ell}{m}(\a_\ell)$,
and we conclude with Proposition~\ref{Phialpha}.
\end{proof}
This existence result is ``constructive'', but impractical,
since it requires computations in the possibly very large algebra $A_{p^b-1}$.
However, as in Algorithm~\ref{algo:allombert}, one should be able
to write $\hat{\alpha}=(1\otimes\kappa)(\alpha_m)^{\frac{m}{\ell}}$
for some $\kappa\in\F_{p}(\zeta_m)$. Moreover $\hat{\alpha}$ is uniquely
determined by our data, thus, so should $\kappa$.
Now our aim is to give an explicit expression for this $\kappa=\kappa_{\ell,m}$.
We start with the case of complete algebras.
\begin{proposition}
\label{key}
In the complete algebra $A_{p^b-1}$ we have
\[ (\alpha_{p^b-1})^{\frac{p^b-1}{p^a-1}}=(1\otimes\zeta_{p^b-1})^{\frac{(b-a)p^{b+a}-bp^b+ap^a}{(p^a-1)^2}}\norm_{b/a}(\alpha_{p^b-1}).\]
\end{proposition}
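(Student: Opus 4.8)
The plan is to compute directly how the scalar Frobenius $1\otimes\sigma$ acts on a nonzero solution $\alpha=\alpha_{p^b-1}$ of~\eqref{h90} for $\zeta=\zeta_{p^b-1}$, and then expand the scalar norm $\norm_{b/a}(\alpha)=\prod_{j=0}^{b/a-1}(1\otimes\sigma^{ja})(\alpha)$ as a single power of $\alpha$ times a scalar power of $1\otimes\zeta$. Since $A_{p^b-1}$ is a commutative algebra, the factors of this product can be rearranged freely and the whole computation reduces to tracking two exponents, one on $\alpha$ and one on $1\otimes\zeta$. (By Lemma~\ref{Kummer_bizarre} every nonzero $\alpha$ is already standard, so no extra hypothesis on the decoration is needed, and throughout we assume $a\,|\,b$ so that both $\norm_{b/a}$ and $\frac{p^b-1}{p^a-1}$ make sense.)

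The crux is an explicit formula for $(1\otimes\sigma^k)(\alpha)$. First I would combine~\eqref{h90}, which gives $(\sigma^{-1}\otimes1)(\alpha)=(1\otimes\zeta^{-1})\alpha$, with Lemma~\ref{FrobFrob}, which gives $(\sigma\otimes\sigma)(\alpha)=\alpha^p$; writing $1\otimes\sigma=(\sigma^{-1}\otimes1)\circ(\sigma\otimes\sigma)$ and using that $\sigma^{-1}\otimes1$ is a ring homomorphism fixing the scalars $1\otimes\F_{p^b}$, this yields
\[ (1\otimes\sigma)(\alpha)=(\sigma^{-1}\otimes1)(\alpha^p)=\bigl((1\otimes\zeta^{-1})\alpha\bigr)^p=(1\otimes\zeta^{-p})\alpha^p. \]
An easy induction, using that $1\otimes\sigma$ sends the scalar $1\otimes\zeta^c$ to $1\otimes\zeta^{cp}$, then gives
\[ (1\otimes\sigma^k)(\alpha)=(1\otimes\zeta^{-kp^k})\alpha^{p^k}\qquad\text{for all }k\geq0; \]
indeed the exponent recursion $c_{k+1}=pc_k-p^{k+1}$ with $c_0=0$ solves to $c_k=-kp^k$.

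Plugging $k=ja$ into this formula and taking the product over $0\leq j<b/a$, the power of $\alpha$ accumulates to $\sum_{j}p^{ja}=\frac{p^b-1}{p^a-1}$, while the power of $1\otimes\zeta$ accumulates to $-a\sum_{j}jp^{ja}$, so that
\[ \norm_{b/a}(\alpha)=(1\otimes\zeta)^{-a\sum_{j=0}^{b/a-1}jp^{ja}}\,\alpha^{\frac{p^b-1}{p^a-1}}. \]
It then only remains to move the scalar to the other side and evaluate the sum. Writing $x=p^a$ and $r=b/a$ and invoking the standard identity $\sum_{j=0}^{r-1}jx^j=\frac{x\bigl(1-rx^{r-1}+(r-1)x^r\bigr)}{(1-x)^2}$, the exponent $a\sum_{j}jp^{ja}$ simplifies, after substituting $x^{r}=p^b$, $x^{r-1}=p^{b-a}$ and clearing denominators, to exactly $\frac{(b-a)p^{b+a}-bp^b+ap^a}{(p^a-1)^2}$, which is the claimed exponent.

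The only real obstacle is bookkeeping: getting the sign and the recursion for the $\zeta$-exponent right, and then matching the closed form of $\sum_{j}jp^{ja}$ to the stated rational expression. Since the exponent of $1\otimes\zeta$ is a priori defined only modulo $p^b-1$, it is worth noting that $\sum_{j}jp^{ja}$ is a genuine integer, so the displayed rational number is automatically integral; a quick check at $b=2a$, where both sides reduce to $ap^a$, confirms the normalization.
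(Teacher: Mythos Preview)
Your proof is correct and follows essentially the same approach as the paper: both arguments use Lemma~\ref{FrobFrob} together with~\eqref{h90} to reduce the comparison of $(\alpha_{p^b-1})^{\frac{p^b-1}{p^a-1}}$ and $\norm_{b/a}(\alpha_{p^b-1})$ to the sum $a\sum_{0\le j<b/a}jp^{ja}$, and then close with the same formal-derivative identity for $\sum jT^j$. The only cosmetic difference is that you first derive the closed form $(1\otimes\sigma^k)(\alpha)=(1\otimes\zeta^{-kp^k})\alpha^{p^k}$ and multiply, whereas the paper takes the ratio $\alpha^{p^{ja}}/(1\otimes\sigma^{ja})(\alpha)$ directly; the computations are otherwise identical.
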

\begin{proof}
Using first Lemma~\ref{FrobFrob}, and then~\eqref{h90}, we get:
\begin{equation*}
\begin{split}
\frac{(\alpha_{p^b-1})^{\frac{p^b-1}{p^a-1}}}{\norm_{b/a}(\alpha_{p^b-1})}&=\prod_{0\leq j<\frac{b}{a}}\frac{(\sigma^{ja}\otimes\sigma^{ja})(\alpha_{p^b-1})}{(1\otimes\sigma^{ja})(\alpha_{p^b-1})}\\
&=\prod_{0\leq j<\frac{b}{a}}(1\otimes\sigma^{ja})\left(\frac{(\sigma^{ja}\otimes 1)(\alpha_{p^b-1})}{\alpha_{p^b-1}}\right)\\
&=\prod_{0\leq j<\frac{b}{a}}(1\otimes\sigma^{ja})(1\otimes\zeta_{p^b-1})^{ja}\\
&=(1\otimes\zeta_{p^b-1})^{\sum_{0\leq j<\frac{b}{a}}jap^{ja}}.
\end{split}
\end{equation*}
We conclude thanks to the identity
\[ \sum_{0\leq j<n}jT^j=T\frac{d}{dT}\!\left(\frac{T^n-1}{T-1}\right)=\frac{(n-1)T^{n+1}-nT^n+T}{(T-1)^2}. \]
\end{proof}
\begin{corollary}
\label{explicit_general_standard_embedding}
Let $(A_\ell,\alpha_\ell)$ and $(A_m,\alpha_m)$ be decorated Kummer algebras,
of respective degrees $\ell\,|\,m$ prime to $p$.
Then the standard Kummer embedding $\embedalg{\ell}{m}:A_\ell\hookrightarrow A_m$
is defined by the assignation
$\alpha_\ell\;\mapsto\;(1\otimes\kappa_{\ell,m})(\alpha_m)^{\frac{m}{\ell}}$, where
\[
  \kappa_{\ell,m}=(\embedcyc{m}{p^b-1})^{-1}((\zeta_{p^b-1})^{-\frac{(b-a)p^{b+a}-bp^b+ap^a}{(p^a-1)\ell}}).
\]
\end{corollary}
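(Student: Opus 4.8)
The plan is to determine $\kappa=\kappa_{\ell,m}$ by pushing everything forward into the large complete algebra $A_{p^b-1}$, where Proposition~\ref{key} already supplies the one nontrivial computation. By the preceding Lemma, $\embedalg{\ell}{m}$ is the unique Kummer embedding with $\embedalg{\ell}{m}(\alpha_\ell)=\hat\alpha$, and since $\hat\alpha$ and $(\alpha_m)^{\frac m\ell}$ are both solutions of~\eqref{h90} for $(\zeta_m)^{\frac m\ell}$, we may write $\hat\alpha=(1\otimes\kappa)(\alpha_m)^{\frac m\ell}$ for a unique $\kappa\in\F_p(\zeta_m)$; the whole task is to compute this $\kappa$ explicitly.

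First I would apply $\embedalg{m}{p^b-1}$ to both sides of $\embedalg{\ell}{m}(\alpha_\ell)=(1\otimes\kappa)(\alpha_m)^{\frac m\ell}$ and invoke the commutativity of the diagram, namely $\embedalg{m}{p^b-1}\circ\embedalg{\ell}{m}=\embedalg{p^a-1}{p^b-1}\circ\embedalg{\ell}{p^a-1}$. On the left-hand side, since $\embedalg{m}{p^b-1}$ extends $1\otimes\embedcyc{m}{p^b-1}$ and is power-compatible (Proposition~\ref{embedincomplete}), the image equals $(1\otimes\embedcyc{m}{p^b-1}(\kappa))\,(\alpha_{p^b-1})^{\frac{p^b-1}{\ell}}$. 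On the right-hand side, combining the power-compatibility of $\embedalg{\ell}{p^a-1}$ with the norm-compatibility of $\embedalg{p^a-1}{p^b-1}$ (Proposition~\ref{embedcomplete}) and the fact that both are algebra morphisms, the image equals $(\norm_{b/a}(\alpha_{p^b-1}))^{\frac{p^a-1}{\ell}}$.

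Next I would use Proposition~\ref{key} to replace $\norm_{b/a}(\alpha_{p^b-1})$ by $(1\otimes\zeta_{p^b-1})^{-E}(\alpha_{p^b-1})^{\frac{p^b-1}{p^a-1}}$, where $E=\frac{(b-a)p^{b+a}-bp^b+ap^a}{(p^a-1)^2}$, and raise to the power $\frac{p^a-1}{\ell}$, an integer because $\nu(\ell)=a$ forces $\ell\mid p^a-1$. This produces precisely the factor $(\alpha_{p^b-1})^{\frac{p^b-1}{\ell}}$ occurring on the left, which is invertible; cancelling it leaves $1\otimes\embedcyc{m}{p^b-1}(\kappa)=(1\otimes\zeta_{p^b-1})^{-E\frac{p^a-1}{\ell}}$. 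Since $E\frac{p^a-1}{\ell}=\frac{(b-a)p^{b+a}-bp^b+ap^a}{(p^a-1)\ell}$, and $\embedcyc{m}{p^b-1}$ is an isomorphism (because $\nu(m)=b$), applying its inverse yields exactly the stated formula for $\kappa_{\ell,m}$.

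The real content of the statement is Proposition~\ref{key}, which is already established; what remains is essentially diagram-chasing and exponent bookkeeping. The only points requiring genuine care are verifying that the three standard embeddings interact correctly -- in particular that each is an algebra morphism extending the appropriate scalar embedding, so that powers and the scalar norm pass through them cleanly -- and checking that every exponent that arises is an honest integer, which follows from $\ell\mid p^a-1$ and $a\mid b$. I anticipate no serious obstacle beyond this routine verification.
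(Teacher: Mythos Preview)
Your proposal is correct and follows essentially the same route as the paper: push both sides into $A_{p^b-1}$, compute each image via the power- and norm-compatibilities of the standard embeddings, and invoke Proposition~\ref{key} to read off the exponent of $\zeta_{p^b-1}$. (One harmless slip: you swapped the labels ``left-hand side'' and ``right-hand side'' when describing the images under $\embedalg{m}{p^b-1}$; the computations themselves are right.)
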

\begin{proof}
It suffices to check that $\embedalg{p^a-1}{p^b-1}(\embedalg{\ell}{p^a-1}(\alpha_\ell))$
and the image of the right-hand-side under $\embedalg{m}{p^b-1}$ coincide in $A_{p^b-1}$.
However, we have $\embedalg{p^a-1}{p^b-1}(\embedalg{\ell}{p^a-1}(\alpha_\ell))=\norm_{b/a}(\alpha_{p^b-1})^{\frac{p^a-1}{\ell}}$,
while $\embedalg{m}{p^b-1}((\alpha_m)^{\frac{m}{\ell}})=(\alpha_{p^b-1})^{\frac{p^b-1}{\ell}}$,
and we conclude with Proposition~\ref{key}
\end{proof}

\begin{proposition}
\label{standard_K_embeddings_compatibles}
Standard Kummer embeddings are compatible with composition:
if $(A_\ell,\alpha_\ell)$, $(A_m,\alpha_m)$, and $(A_n,\alpha_n)$ are decorated Kummer algebras
with $\ell\,|\,m\,|\,n$, the corresponding standard embeddings
satisfy $\embedalg{\ell}{n}=\embedalg{m}{n}\circ\embedalg{\ell}{m}$.
\end{proposition}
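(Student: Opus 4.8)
The plan is to reduce the desired compatibility to a single statement about the norm‑compatible embeddings between \emph{complete} Kummer algebras, where it follows from transitivity of the scalar norm. Set $a=\nu(\ell)$, $b=\nu(m)$, $c=\nu(n)$; since $\ell\,|\,m\,|\,n$ and $\nu$ is the multiplicative order of $p$, one has $a\,|\,b\,|\,c$, so all the complete‑algebra embeddings $\embedalg{p^a-1}{p^b-1}$, $\embedalg{p^b-1}{p^c-1}$, $\embedalg{p^a-1}{p^c-1}$ of Proposition~\ref{embedcomplete}, as well as the power‑compatible embeddings $\embedalg{\ell}{p^a-1}$, $\embedalg{m}{p^b-1}$, $\embedalg{n}{p^c-1}$ of Proposition~\ref{embedincomplete}, are defined. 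Both $\embedalg{\ell}{n}$ and $\embedalg{m}{n}\circ\embedalg{\ell}{m}$ are Kummer embeddings $A_\ell\hookrightarrow A_n$ (a composite of Kummer embeddings extends the composite scalar embedding and still commutes with $\sigma\otimes1$). Since $\embedalg{n}{p^c-1}$ is injective, it is enough to prove the equality after post‑composing with it, i.e.\ to show $\embedalg{n}{p^c-1}\circ\embedalg{\ell}{n}=\embedalg{n}{p^c-1}\circ\embedalg{m}{n}\circ\embedalg{\ell}{m}$, and then cancel on the left.

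Next I would rewrite both sides using the defining commuting squares from the construction of the general standard embeddings (the lemma preceding Proposition~\ref{key}), which for the pairs $(\ell,n)$, $(m,n)$ and $(\ell,m)$ read
\[
\embedalg{n}{p^c-1}\circ\embedalg{\ell}{n}=\embedalg{p^a-1}{p^c-1}\circ\embedalg{\ell}{p^a-1},
\]
\[
\embedalg{n}{p^c-1}\circ\embedalg{m}{n}=\embedalg{p^b-1}{p^c-1}\circ\embedalg{m}{p^b-1},
\]
\[
\embedalg{m}{p^b-1}\circ\embedalg{\ell}{m}=\embedalg{p^a-1}{p^b-1}\circ\embedalg{\ell}{p^a-1}.
\]
Chaining the last two identities gives $\embedalg{n}{p^c-1}\circ\embedalg{m}{n}\circ\embedalg{\ell}{m}=\embedalg{p^b-1}{p^c-1}\circ\embedalg{p^a-1}{p^b-1}\circ\embedalg{\ell}{p^a-1}$, while the first identity gives $\embedalg{n}{p^c-1}\circ\embedalg{\ell}{n}=\embedalg{p^a-1}{p^c-1}\circ\embedalg{\ell}{p^a-1}$. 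Thus everything reduces to the associativity of the complete‑algebra embeddings, namely $\embedalg{p^b-1}{p^c-1}\circ\embedalg{p^a-1}{p^b-1}=\embedalg{p^a-1}{p^c-1}$.

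I expect this last identity to be the main obstacle, since it is not recorded earlier. By Proposition~\ref{alphagen} it suffices to check that both sides send $\alpha_{p^a-1}$ to $\norm_{c/a}(\alpha_{p^c-1})$; the right‑hand side does so by the definition in Proposition~\ref{embedcomplete}. For the left‑hand side I would use that every Kummer embedding commutes with $1\otimes\sigma$ — precisely what is shown in the proof of Proposition~\ref{Phiphi} — hence commutes with each $1\otimes\sigma^{ja}$ and therefore, being a ring homomorphism, with the scalar norm $\norm_{b/a}$ of Definition~\ref{def:norm}. Combining this with $\embedalg{p^a-1}{p^b-1}(\alpha_{p^a-1})=\norm_{b/a}(\alpha_{p^b-1})$, $\embedalg{p^b-1}{p^c-1}(\alpha_{p^b-1})=\norm_{c/b}(\alpha_{p^c-1})$, and the transitivity $\norm_{b/a}\circ\norm_{c/b}=\norm_{c/a}$, I get
\[
\embedalg{p^b-1}{p^c-1}\bigl(\norm_{b/a}(\alpha_{p^b-1})\bigr)=\norm_{b/a}\bigl(\norm_{c/b}(\alpha_{p^c-1})\bigr)=\norm_{c/a}(\alpha_{p^c-1}).
\]
Substituting this back into the computation of the second step and cancelling the injective $\embedalg{n}{p^c-1}$ yields $\embedalg{m}{n}\circ\embedalg{\ell}{m}=\embedalg{\ell}{n}$, completing the proof. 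An alternative, more computational route would instead verify the scalar identity $\kappa_{\ell,n}=\embedcyc{m}{n}(\kappa_{\ell,m})\,\kappa_{m,n}^{m/\ell}$ directly from the exponent formula of Corollary~\ref{explicit_general_standard_embedding}, but I would avoid it, as the diagram‑chase above sidesteps that manipulation entirely.
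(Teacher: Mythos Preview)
Your proof is correct and follows exactly the paper's second (diagram-chasing) approach: reduce to equality after pushing into $A_{p^c-1}$ via the injective $\embedalg{n}{p^c-1}$, use the defining commuting squares for the three pairs, and then invoke transitivity of the scalar norm to obtain the common image $\norm_{c/a}(\alpha_{p^c-1})^{\frac{p^a-1}{\ell}}$. The paper's proof is very terse here, so you have essentially supplied the details it omits, in particular the explicit verification that $\embedalg{p^b-1}{p^c-1}\circ\embedalg{p^a-1}{p^b-1}=\embedalg{p^a-1}{p^c-1}$ via commutation of Kummer embeddings with $1\otimes\sigma$ and norm transitivity; your closing remark about the direct computation with the $\kappa_{\ell,m}$ constants is precisely the paper's first stated route.
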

\begin{proof}
We have to show that $\embedalg{\ell}{n}(\alpha_\ell)=\embedalg{m}{n}(\embedalg{\ell}{m}(\alpha_\ell))$;
nothing but a pleasant computation with the explicit constants given 
by Corollary~\ref{explicit_general_standard_embedding}.

Alternatively, set $a=\nu(\ell),b=\nu(m),c=\nu(n)$, and decorate $A_{p^a-1},A_{p^b-1},A_{p^c-1}$.
It suffices to show that the elements $\embedalg{\ell}{n}(\alpha_\ell)$
and $\embedalg{m}{n}(\embedalg{\ell}{m}(\alpha_\ell))$ have the same image under $\embedalg{n}{p^c-1}$
in $A_{p^c-1}$.
Chasing in the diagram
\begin{equation*}
\begin{CD}
A_{p^a-1} @>>> A_{p^b-1} @>>> A_{p^c-1} \\
@AAA @AAA @AAA\\
A_\ell @>>> A_m @>>> A_n
\end{CD}
\end{equation*}
we see that this common image
is $\norm_{c/a}(\alpha_{p^c-1})^{\frac{p^a-1}{\ell}}$.
\end{proof}

By a \emph{decorated finite field} (of degree $\ell$, an integer prime to $p$,
and relative to a given cyclotomic lattice $\system$),
we mean a pair $(\F_{p^\ell},s_\ell)$,
where $\F_{p^\ell}$ is a finite field,
and $s_\ell\in\F_{p^\ell}$ a standard generating element
in the sense of Definition~\ref{sstandard}.

We can finally state:
\begin{algorithm}
  \caption{(Standard compatible embeddings)}
  \label{algo:std_embed}
  \begin{algorithmic}[1]
    \REQUIRE {$\system$ a cyclotomic lattice, and $(\F_{p^\ell},s_\ell)$, $(\F_{p^m},s_m)$, decorated finite fields, for $\ell\,|\,m$ integers prime to $p$.}
    \ENSURE {$t\in\F_{p^m}$, such that the assignation $s_\ell\mapsto t$
      defines a standard embedding $\embed{\ell}{m}:\F_{p^\ell}\hookrightarrow\F_{p^m}$,
      compatible with composition.}
  \STATE Prepare the Kummer algebras $A_\ell$ and $A_m$.
  \STATE Recover $\alpha_\ell$ from $s_\ell$ and $\alpha_m$ from $s_m$ using equations~\eqref{recoveralpha}.
  \STATE Compute $\kappa_{\ell,m}=(\embedcyc{m}{p^b-1})^{-1}((\zeta_{p^b-1})^{-\frac{(b-a)p^{b+a}-bp^b+ap^a}{(p^a-1)\ell}})$ where $a=\nu(\ell)$, $b=\nu(m)$.
  \STATE Return $\first{(1\otimes\kappa)(\alpha_m)^{\frac{m}{\ell}}}{(\zeta_m)^{\frac{m}{\ell}}}$.
  \end{algorithmic}
\end{algorithm}
\begin{proposition}
\label{standard_ff_embeddings_compatibles}
Standard finite field embeddings are compatible with composition:
if $(\F_{p^\ell},\alpha_\ell)$, $(\F_{p^m},\alpha_m)$, and $(\F_{p^n},\alpha_n)$ are decorated finite fields
with $\ell\,|\,m\,|\,n$, the corresponding standard embeddings
satisfy $\embed{\ell}{n}=\embed{m}{n}\circ\embed{\ell}{m}$.
\end{proposition}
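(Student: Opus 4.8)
The plan is to reduce compatibility at the level of finite fields to the compatibility already established at the level of Kummer algebras in Proposition~\ref{standard_K_embeddings_compatibles}, transporting it through the composition-preserving correspondence of Proposition~\ref{Phiphi}.

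First I would observe that each standard finite field embedding $\embed{\ell}{m}$ is precisely the finite field part of the standard Kummer embedding $\embedalg{\ell}{m}$ under the correspondence of Proposition~\ref{Phiphi}. Indeed, by Corollary~\ref{explicit_general_standard_embedding} the standard Kummer embedding is described by $\embedalg{\ell}{m}(\alpha_\ell)=(1\otimes\kappa_{\ell,m})(\alpha_m)^{\frac{m}{\ell}}$, with exactly the constant $\kappa_{\ell,m}$ computed in Algorithm~\ref{algo:std_embed}. Applying Corollary~\ref{alphaphi} to $\hat\alpha=\embedalg{\ell}{m}(\alpha_\ell)$, the associated field embedding $\phi$ sends the standard generating element $s_\ell=\first{\alpha_\ell}{\zeta_\ell}$ to $\first{(1\otimes\kappa_{\ell,m})(\alpha_m)^{\frac{m}{\ell}}}{(\zeta_m)^{\frac{m}{\ell}}}$, which is precisely the output $t$ returned by the algorithm. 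Hence $\embed{\ell}{m}=\phi$ is the field part of $\embedalg{\ell}{m}$, and the same identification holds for the pairs $(m,n)$ and $(\ell,n)$.

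Next I would invoke Proposition~\ref{standard_K_embeddings_compatibles}, which yields $\embedalg{\ell}{n}=\embedalg{m}{n}\circ\embedalg{\ell}{m}$ at the algebra level. Since the correspondence of Proposition~\ref{Phiphi} commutes with composition of embeddings, taking field parts on both sides immediately gives $\embed{\ell}{n}=\embed{m}{n}\circ\embed{\ell}{m}$, as desired.

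The only real subtlety, and thus the step I expect to require the most care, is the identification in the first paragraph: one must confirm that the $\kappa_{\ell,m}$ produced by Algorithm~\ref{algo:std_embed} is the very constant appearing in the explicit description of $\embedalg{\ell}{m}$ given by Corollary~\ref{explicit_general_standard_embedding}, so that the field embedding returned by the algorithm genuinely coincides with the field part of the standard Kummer embedding. Once this definitional match is in place, the remainder is a purely formal transfer of compatibility through the composition-preserving correspondence of Proposition~\ref{Phiphi}, requiring no further computation.
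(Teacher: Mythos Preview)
Your proposal is correct and follows essentially the same approach as the paper, which proves the result by citing Corollary~\ref{alphaphi} and Proposition~\ref{standard_K_embeddings_compatibles}. You have simply unpacked in more detail how these two ingredients combine, in particular making explicit the role of the composition-preserving correspondence of Proposition~\ref{Phiphi} and the match between the algorithm's output and the field part of the standard Kummer embedding.
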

\begin{proof}
Corollary~\ref{alphaphi} and Proposition~\ref{standard_K_embeddings_compatibles}.
\end{proof}

\section{Implementation}
\label{sec:implementation}

In the previous sections we kept the description of Kummer embeddings
abstract, leaving many computational details unspecified. %
There are various ways in which our algorithms can possibly be
implemented, depending on how one chooses to represent finite fields
and the cyclotomic lattice $\system$. %
A reasonable option is to use (pseudo)-Conway polynomials to represent
the fields $\F_p(\zeta_{p^a-1})$, and deduce from them the smallest
possible representation for any other field $\F_p(\zeta_\ell)$. %
Assuming this technique, we can prove a bound on the complexity of our
algorithms.

\begin{proposition}
  Given a collection of (pseudo)-Conway polynomials for $\F_p$, of
  degree up to $d$, standard solutions $\alpha_\ell$ of~\eqref{h90} can
  be computed for any $\ell\,|\,(p^i-1)$ for any $i\le d$ using
  $O\bigr(\M(\ell^2)\log(\ell) + \M(\ell)\log(\ell)\log(p)\bigl)$ %
  operations. %
  After that, Kummer embeddings $\F_{p^\ell}\subset\F_{p^m}$ can be
  computed using $O(\M(m^2)\log(m))$ operations.
\end{proposition}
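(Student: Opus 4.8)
The plan is to treat the two halves of the statement separately, analysing Algorithm~\ref{algo:decoration} for the construction of $\alpha_\ell$ and Algorithm~\ref{algo:std_embed} for the embedding, and in each case to isolate the single dominant operation. The starting observation is a size bound on the Kummer algebra. Since $\nu(\ell)=\mathrm{ord}(p)\bmod\ell$ divides $\phi(\ell)$, we have $\nu(\ell)<\ell$, so the $\F_p$-dimension of $A_\ell=\F_{p^\ell}\otimes\F_p(\zeta_\ell)$ is $\ell\,\nu(\ell)<\ell^2$. Representing $A_\ell$ as a bivariate quotient, one multiplication therefore costs $O(\M(\ell\,\nu(\ell)))=O(\M(\ell^2))$, while one multiplication in the field of scalars $\F_p(\zeta_\ell)=\F_{p^{\nu(\ell)}}$ costs only $O(\M(\ell))$. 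That field is read directly off the given Conway polynomial of degree $\nu(\ell)\le d$, and $\zeta_\ell$ and the standard constant $\stda_\ell$ are obtained from it by binary exponentiation of a root of unity; each such exponentiation, like every application of the Frobenius $\sigma$, is an exponentiation with an $O(\log p)$- or $O(\log\ell)$-bit exponent in a field of degree at most $\ell$, hence falls within $O(\M(\ell)\log\ell\log p)$.

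For Algorithm~\ref{algo:decoration} I would bound the steps one by one. Finding a nonzero solution $\alpha'_\ell$ of~\eqref{h90} amounts to extracting an eigenvector of $\sigma\otimes1$ for the eigenvalue $1\otimes\zeta_\ell$; projecting a generic element onto this one-dimensional eigenspace, using the iterated-Frobenius techniques recalled in Section~\ref{sec:conway}, must be shown to stay within the two claimed terms. The Kummer constant $(\alpha'_\ell)^\ell=1\otimes\a_\ell'$ is then computed by binary powering in $A_\ell$, which uses $O(\log\ell)$ multiplications in $A_\ell$ for a total of $O(\M(\ell^2)\log\ell)$: this is exactly the first term and the dominant algebraic cost. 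The $\ell$-th root $\kappa$ of $\stda_\ell/\a_\ell'$ lives in $\F_p(\zeta_\ell)$ and, being known to exist, is recovered by exponentiation there; together with the Frobenius applications used for~\eqref{h90} and in the recovery formulas~\eqref{recoveralpha}, it contributes the second term $O(\M(\ell)\log\ell\log p)$. Finally the standard polynomial $P_\ell$ is the minimal polynomial of $\first{\alpha_\ell}{\zeta_\ell}\in\F_{p^\ell}$, obtained by Shoup's algorithm in $O(\ell^{(\omega+1)/2})$ operations, which is absorbed by the first term. Summing yields the announced bound.

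For Algorithm~\ref{algo:std_embed} the analysis is parallel, but now the heavy arithmetic happens in $A_m$, of dimension $m\,\nu(m)<m^2$. Recovering $\alpha_\ell,\alpha_m$ from $s_\ell,s_m$ via~\eqref{recoveralpha}, and assembling the explicit constant $\kappa_{\ell,m}$ of Corollary~\ref{explicit_general_standard_embedding} (a pulled-back power of $\zeta_{p^b-1}$ computed in $\F_p(\zeta_m)=\F_{p^{\nu(m)}}$), are lower-order. The dominant operation is the evaluation of $(\alpha_m)^{\frac{m}{\ell}}$, a binary power with an exponent of $O(\log m)$ bits performed in $A_m$: this is $O(\log m)$ multiplications at $O(\M(m^2))$ each, that is $O(\M(m^2)\log m)$, which is the claimed cost, and the final coordinate extraction $\first{\cdot}{\cdot}$ is free.

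The main obstacle is not the two dominant powerings, which are transparent, but the careful accounting of the auxiliary operations---the eigenvector extraction for~\eqref{h90}, the Frobenius evaluations, and the $\ell$-th root extraction---so that their cost genuinely collapses into the two stated terms. The delicate point is the dependence on $\nu(\ell)$, which can be as large as $\ell-1$: one must ensure that only $O(\log\ell)$ Frobenius evaluations (each $O(\M(\ell)\log p)$) are needed and that the root extraction in $\F_p(\zeta_\ell)$ does not exceed $O(\M(\ell)\log\ell\log p)$, leaning on the subroutine complexities recalled in Section~\ref{sec:conway} and on the detailed estimates of~\cite{brieulle2018computing,ffisom-long}.
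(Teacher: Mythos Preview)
Your analysis of the first half (Algorithm~\ref{algo:decoration}) is essentially the paper's: the dominant costs are the powering $(\alpha'_\ell)^\ell$ in $A_\ell$ and the $\ell$-th root in $\F_p(\zeta_\ell)$, with the H90 solution and the root extraction bounded via~\cite{brieulle2018computing}, and the minimal polynomial by Shoup's algorithm.

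In the second half, however, there is a real gap. You write that ``the final coordinate extraction $\first{\cdot}{\cdot}$ is free''. This is only true for $\first{\cdot}{\zeta_m}$, i.e.\ reading off the constant term in the native $\zeta_m$-power basis of $A_m$. What Algorithm~\ref{algo:std_embed} actually returns is $\first{(1\otimes\kappa)(\alpha_m)^{m/\ell}}{(\zeta_m)^{m/\ell}}$: the element lives in $\F_{p^m}\otimes\F_p(\zeta_\ell)$ and must be expressed on the basis $1,\zeta_\ell,\zeta_\ell^2,\ldots$ with $\zeta_\ell=(\zeta_m)^{m/\ell}$, and its constant coefficient taken \emph{there}. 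In general the exponents $j\,m/\ell$ exceed $b=\nu(m)$, so the $\zeta_\ell^j$ reduce modulo the minimal polynomial of $\zeta_m$ to polynomials with nonzero constant term; the two ``constant coefficients'' do not coincide, and a genuine change of basis from $\F_p(\zeta_m)$ down to $\F_p(\zeta_\ell)$ is needed on each of the $m$ scalar coefficients.

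Doing this naively with the generic embedding-evaluation routines of Section~\ref{sec:conway} costs $O\bigl(m\cdot m^{(\omega+1)/2}\bigr)=O\bigl(m^{(\omega+3)/2}\bigr)$, which exceeds the target $O(\M(m^2)\log m)$. The paper's proof handles precisely this point with a trace trick: since only the single $\F_p$-linear form $x\mapsto\first{\Tr_{\F_p(\zeta_m)/\F_p(\zeta_\ell)}(x\eta)}{\zeta_\ell}$ is needed (for any $\eta$ of trace $1$), one precomputes its coordinate vector on the $\zeta_m$-power basis via an explicit generating-series identity involving $h_m'(\zeta_m)/h_\ell'((\zeta_m)^{m/\ell})$ and a single modular composition, at cost $O(m^{(\omega+1)/2})$; applying this linear form to all $m$ coefficients then costs only $O(m\M(m))$. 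This extraction step, not the binary powering, is the non-obvious content of the second bound, and your outline misses it.
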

\begin{proof}
  Let $a=\nu(\ell)$ be the level of $A_\ell$. We take the $a$-th
  polynomial from the collection of (pseudo)-Conway polynomials, and
  use it to define $\zeta_{p^a-1}$. %
  Because $a\in O(\ell)$, the cost of multiplications in
  $\F_p(\zeta_{p^a-1})$ will be bounded by $O(\M(\ell))$.

  From $\zeta_{p^a-1}$, we compute $\zeta_l$ using $O(\ell\M(\ell))$
  operations, and its minimal polynomial in
  $O(\ell^{(\omega+1)/2})$. %
  Then, the Kummer constant $\stda_\ell=(\zeta_{p^a-1})^a$ is computed
  in negligible time, and its expression in the power basis of
  $\zeta_\ell$ is computed in $O(\ell^{(\omega+1)/2})$ using the
  algorithms for evaluating embeddings mentioned in
  Section~\ref{sec:conway}.

  To construct the Kummer algebra
  $A_\ell=\F_{p^\ell}\otimes\F_p(\zeta_\ell)$ we need an irreducible
  polynomial of degree $\ell$, not necessarily related to the
  (pseudo)-Conway polynomials used to represent the fields of
  scalars. %
  Very efficient, quasi-optimal algorithms for finding such a
  polynomial are given
  in~\cite{BoFlSaSc06,couveignes+lercier11,DeDoSc13}, we can thus
  neglect this cost.
  
  The cost of computing a solution $\alpha'_\ell$ to~\eqref{h90} was
  extensively studied in~\cite{brieulle2018computing}, where it was
  found to be bounded by
  $O\bigr(\M(\ell^2)\log(\ell) + \M(\ell)\log(p)\bigl)$. %
  Then, the constant $\a'_l=(\alpha'_\ell)^\ell$ is computed using
  $O\left(\M(\ell^2)\log(\ell)\right)$ operations, and the $\ell$-th root
  $\kappa$ is computed in $O(\M(\ell)\log(\ell)\log(p))$ according
  to~\cite{brieulle2018computing}. %
  $\alpha_\ell=(1\otimes\kappa)\alpha'_\ell$ is then computed in a
  negligible number of operations.

  Finally, the projection $\first{\alpha_\ell}{\zeta_\ell}$ comes for
  free, and its minimal polynomial $P_\ell$ is computed again in
  $O(\ell^{(\omega+1)/2})$ operations.

  Now, in order to compute a Kummer embedding of $(\F_{p^\ell},\alpha_\ell)$
  into $(\F_{p^m},\alpha_m)$, we compute the scalar $\kappa_{\ell,m}$
  in $O(m\M(m))$ operations, and $(\alpha_m)^{\frac{m}{\ell}}$ in
  $O(\M(m^2)\log(m))$. %

  We then need to convert $(\alpha_m)^{\frac{m}{\ell}}$ in the power
  basis of $\zeta_\ell$. %
  Applying a generic change of basis algorithm as before would be too
  expensive: indeed we have to convert $m$ coefficients from the field
  of scalars $\F_p(\zeta_m)$ to $\F_p(\zeta_\ell)$, which would cost
  $O(m^{(\omega+3)/2})$. %
  Instead we notice that we are only interested in the value
  $\first{(1\otimes\kappa)(\alpha_m)^{\frac{m}{\ell}}}{\zeta_\ell}$,
  therefore we proceed as follows.

  Let $\Tr$ denote the trace map from $\F_p(\zeta_m)$ to
  $\F_p(\zeta_\ell)$, and let $\eta\in\F_p(\zeta_m)$ be such that
  $\Tr(\eta)=1$. %
  Then the map $x\mapsto \Tr(x\eta)$ sends
  $(\zeta_m)^{\frac{m}{\ell}}$ to $\zeta_\ell$, and is
  $\F_p(\zeta_\ell)$-linear, it thus agrees with the inverse map of
  $\zeta_\ell\mapsto(\zeta_m)^{\frac{m}{\ell}}$ on the image of
  $\F_p(\zeta_\ell)$. %
  
  We thus need to evaluate $x\mapsto\first{\Tr(x\eta)}{\zeta_\ell}$
  for many values in $\F_p(\zeta_m)$, but this is a $\F_p$-linear
  form, hence we can precompute its vector on the power basis of
  $\zeta_m$. %
  Let $h_m,h_\ell$ be the minimal polynomials of $\zeta_m,\zeta_\ell$,
  and let $b,a$ be their degrees. %
  Let $h_0$ be the constant coefficient of $h_\ell$, and let
  \[\tau = -\frac{h_0}{(\zeta_m)^{\frac{m}{\ell}}} \frac{h_m'(\zeta_m)}{h_\ell'((\zeta_m)^{\frac{m}{\ell}})}\in\F_p(\zeta_m),\]
  direct calculation shows that
  \begin{equation*}
    \sum_{i=0}^{b-1} \first{\Tr(\zeta_m^i)}{\zeta_\ell}Z^i = \frac{\tau(Z^{-1})}{Zh_m(Z^{-1})}  \mod Z^b,
  \end{equation*}
  where by $\tau(Z)$ we mean $\tau\in\F_p(\zeta_m)$ seen as a
  polynomial in $\zeta_m$. %
  Hence, we can compute the vector of the linear form
  $x\mapsto\first{\Tr(x)}{\zeta_l}$ using only basic polynomial
  arithmetic and modular composition, i.e., in $O(m^{(\omega+1)/2})$
  operations.

  Finally, we compute
  $(1\otimes\kappa\eta)(\alpha_m)^{\frac{m}{\ell}}$, we see it as a
  polynomial with coefficients in $\F_p(\zeta_m)$, and we apply the
  map $\first{\Tr(x)}{\zeta_l}$ to each coefficient. %
  This costs $O(m\M(m))$ operations.
\end{proof}

We remark that storing the decorated fields $(\F_{p^l},\alpha_\ell)$
requires $O(\ell^2)$ field elements, however, using the formulas
in~\cite{Allombert02,brieulle2018computing}, it is possible to only
store $\first{\alpha_\ell}{\zeta_\ell}$, and recover all other
coefficients of $\alpha_\ell$ in $O(\ell\M(\ell)\log(p))$ operations.

To demonstrate the feasibility of this approach, we implemented it in
the Julia-based CAS Nemo~\cite{Fieker:2017:NCA:3087604.3087611}, with
performance critical routines written in C/Flint~\cite{flint}. %
Our code is available as a Julia package at
\url{https://github.com/erou/LatticeGFH90.jl}.

We tested Algorithms~\ref{algo:decoration} and~\ref{algo:std_embed}
for various small primes, using precomputed Conway polynomials
available in Nemo. %
We do not see major differences between different primes. %
In Figure~\ref{fig:timings} we report timings obtained for the case
$p=3$, on an Intel Core i7-7500U CPU clocked at 2.70GHz, using Nemo
0.11.1 running on Julia 1.1.0, and Nemo's current version of Flint. %
The plot on the left shows timings for
Algorithm~\ref{algo:decoration}, for degrees $\ell$ growing from $1$
to $200$, for every $\ell$ coprime to $p$ and such that the
$\nu(\ell)$-th Conway polynomial is available in Nemo; the color scale
shows the level of the associated algebra $A_\ell$. %
The bottleneck of this algorithm appears to be the $\ell$-th root
extraction routine.

The plot on the right shows timings for
Algorithm~\ref{algo:std_embed}, measured by computing the standard
embedding of $\mathbb{F}_{p^2}$ in $\mathbb{F}_{p^\ell}$. %
As expected, computing the embeddings takes negligible time in
comparison to the decoration of the finite fields. %
We also tested embedding fields larger than $\F_{p^2}$, and noticed
that the running time mostly depends on the size of the larger field.

\begin{figure}[h]
  \centering
  \includegraphics[scale=0.84]{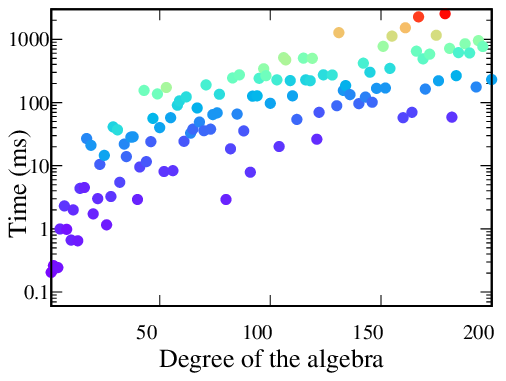}
  \includegraphics[scale=0.84]{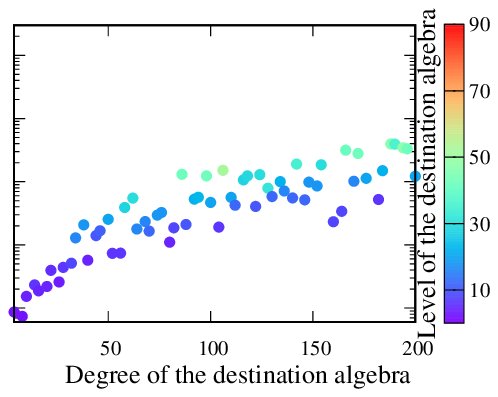}
  \caption{Timings for computing decorated fields
    $(\F_{p^\ell},\alpha_\ell)$ (left, log scale), and for computing the
    standard Kummer embedding from $\mathbb{F}_{p^2}$ to $\F_{p^\ell}$
    (right) for $p=3$.}
  \label{fig:timings}
\end{figure}

\section{Conclusion and future work}
\label{sec:conclusion}

We presented a new family of standard compatible polynomials for
defining finite fields. %
Its construction being dependent on the availability of Conway
polynomials, it has, at the present moment, very little practical
impact; its existence is nevertheless remarkable in itself.

It is even evident that computing our standard polynomials is
essentially equivalent to computing Conway polynomials; indeed from
$\alpha_\ell$ one can immediately deduce $(\zeta_{p^a-1})^a$, and by
taking an $a$-th root (doable in polynomial time in $\ell$), deduce
$\zeta_{p^a-1}$ and the associated Conway polynomial. %
Hence, an efficient algorithm for computing our polynomials (for
arbitrary degrees) would imply an efficient algorithm to compute
Conway polynomials, which would be unexpected.

However, our proposed implementation is not the only possible way to
exploit our definitions. %
It would be interesting, indeed, to find some middle ground between
the flexibility of the Bosma--Steel--Cannon framework and the rigidity
of Conway polynomials, for example by lazily enforcing the conditions
required to have a standard solution of~\eqref{h90}, while
incrementally constructing the lattice of roots of unity. %

Another line of work would be to give a complete implementation of a
lattice of finite fields, not limited to extensions of degree coprime
to $p$. %
We leave these questions for future work.

\begin{acks}
We would like to thank the anonymous reviewers for their useful
comments. %
We thank \'Eric Schost for fruitful discussions and for
helping bootstrap this work during a visit by two of the authors to
the University of Waterloo. %
We acknowledge financial support from the French ANR-15-CE39-0013
project \emph{Manta}, the \emph{OpenDreamKit} Horizon 2020 European
Research Infrastructures project (\#676541), and from the French
Domaine d'Int\'er\^et Majeur \emph{Math'Innov}.
\end{acks}

\bibliographystyle{ACM-Reference-Format}
\balance
\bibliography{gf-h90}

\end{document}